\documentclass[12pt]{amsart}

\vsize 27.9truecm \hsize 21.59truecm
\setlength{\oddsidemargin}{0.5cm}
\setlength{\evensidemargin}{0.0cm} \setlength{\topmargin}{-0.0cm}
\setlength{\headheight}{0.0cm}
\setlength{\textheight}{23.5cm} \setlength{\textwidth}{15.5cm}

\newtheorem{theorem}{Theorem}[section]
\newtheorem{lemma}{Lemma}[section]

\newtheorem{prop}{Proposition}[section]
\newtheorem{remark}{Remark}[section]
\newtheorem{defi}{Definition}[section]

\def\O{\Omega}
\def\R{{\mathbb R}}

\def\g{\gamma}

\begin{document}
\section*{}

\setcounter{equation}{0}
\title[]{Improved Poincar\'e inequalities in fractional Sobolev spaces}

\author{Irene Drelichman}
\address{IMAS (UBA-CONICET) \\
Facultad de Ciencias Exactas y Naturales\\
Universidad de Buenos Aires\\
Ciudad Universitaria\\
1428 Buenos Aires\\
Argentina}
\email{irene@drelichman.com}

\author{Ricardo G. Dur\'an}
\address{IMAS (UBA-CONICET) and Departamento de Matem\'atica\\
Facultad de Ciencias Exactas y Naturales\\
Universidad de Buenos Aires\\
Ciudad Universitaria\\
1428 Buenos Aires\\
Argentina}
\email{rduran@dm.uba.ar}

\keywords{Sobolev inequality, Poincar\'e inequality, fractional norms, weighted Sobolev spaces, John domains, $s$-John domains, cusp domains}

\thanks{Supported by ANPCyT under grant PICT 2014-1771, and by Universidad de Buenos Aires under grant 20020120100050BA. The authors are members of
CONICET, Argentina.}

\subjclass[2010]{Primary: 26D10; Secondary: 46E35}

\begin{abstract}
We obtain improved fractional Poincar\'e and Sobolev Poincar\'e inequalities including powers of the distance to the boundary in John, $s$-John domains and H\"older-$\alpha$ domains, and discuss their optimality.

\end{abstract}
\maketitle

\section{Introduction}
\label{intro}

Poincar\'e and Sobolev-Poincar\'e inequalities in non-Lipschitz domains have been the object of extensive study.  They can be seen as special cases of the following larger family of so-called \emph{improved Poincar\'e inequalities}:
\begin{equation}
\label{poinc-intro}
\inf_{c\in\R} \|f(x) - c\|_{L^q(\O)} \le C  \|\nabla f(x) \, d(x)^b \|_{L^p(\O)}
\end{equation}
where $d(x)$ denotes the distance to the boundary of $\O$, $b\in [0,1]$, and
$p$ and $q$ satisfy appropriate restrictions.
The usual assumption for these inequalities to hold is that the domain $\O \subset \R^n$
belongs to the class of John or $s$-John domains (they will be called $\beta$-John in this paper,
since $s$ is reserved for the fractional derivative), see Section 2 for a precise definition.
In the case of John domains, a partial converse is also true in the following sense:
if $\O$ has finite measure and satisfies a separation property, then the validity of the
Sobolev-Poincar\'e inequality implies the John condition (see \cite{BK}).
A possibly incomplete list of references on improved Poincar\'e inequalities and their
generalizations to weighted settings and measure spaces includes
\cite{C, CW1, CW2, DD, Ha, HK,  H, H2, KM, M}.

More recently, some authors have turned their attention to fractional generalizations
of Poincar\'e and Sobolev-Poincar\'e inequalities, where a fractional seminorm appears instead of the  norm in $W^{1,p}(\O)$.
Indeed, in \cite{D, HV} the following inequalities were introduced for John domains:
\begin{equation}
\label{spoincf2-intro}
\inf_{c\in\R}\|f(y) - c\|_{L^q(\O)}
\le C \left\{
\int_\O\int_{\O \cap B^n(x,\tau dist(x,\partial \O))}\frac{|f(z)-f(x)|^p}{|z-x|^{n+sp}} \, dz dx
\right\}^{1/p}
\end{equation}
with $1\le p\le q\le \frac{np}{n-sp}$ and $s, \tau \in (0,1)$.

The seminorm appearing on the RHS of \eqref{spoincf2-intro} can be seen to be equivalent on Lipschitz domains to the usual seminorm in $W^{s,p}(\O)$, that is, integrating over $\O\times\O$ (see \cite[equation (13)]{Dy}), but it can be strictly smaller than the usual seminorm for general John domains (see \cite[Proposition 3.4]{D}). Moreover, it is easy to see that, unlike the classical Poincar\'e inequality, the inequality
\begin{equation}
\label{poincf-intro}
\inf_{c\in\R} \|f(y) - c\|_{L^p(\O)} \le C   \left\{ \int_\O \int_\O \frac{|f(z)-f(x)|^p}{|z-x|^{n+sp}} \, dy \, dx \right\}^{1/p}
\end{equation}
holds for any bounded domain $\O$ and $s\in (0,1)$ (see Section 2), while the stronger inequality
\begin{equation}
\label{poincfd-intro}
\inf_{c\in\R} \|f(y) - c\|_{L^p(\O)} \le C   \left\{ \int_\O \int_{\O \cap B^n(x,\tau dist(x,\partial \O))} \frac{|f(z)-f(x)|^p}{|z-x|^{n+sp}} \, dy \, dx \right\}^{1/p}
\end{equation}
may fail for general domains, for instance, for certain $\beta$-John domains if  $\beta$ is sufficiently large (see Theorem \ref{mushroom}).

Regarding the Sobolev-Poincar\'e inequality,  it was proved in \cite[Theorem 1.2]{Z} that
\begin{equation}
\label{spoincf-intro}
\inf_{c\in\R} \|f(y) - c\|_{L^\frac{np}{n-sp}(\O)} \le C   \left\{\int_\O \int_\O \frac{|f(z)-f(x)|^p}{|z-x|^{n+sp}} \, dy \, dx \right\}^{1/p}
\end{equation}
holds for the class of Ahlfors $n$-regular domains, which is larger than that of the John domains,
 but if we turn to the inequality with the stronger seminorm, there are Ahlfors $n$-regular domains for which the
 inequality fails (see \cite[Theorem 3.1]{D}).
  On John domains, as mentioned before, the Sobolev-Poincar\'e inequality holds with the stronger seminorm
  and, moreover, it was proved in \cite[Theorem 6.1]{D}
  that a partial converse also holds:
if a fractional Sobolev-Poincar\'e inequality with the stronger seminorm holds on a
domain $\O$ with finite measure which satisfies the separation property, then $\O$ must
satisfy the John condition.

In this paper, we study  generalizations of \eqref{spoincf2-intro} which include -on both sides- weights that are a power of the distance to the boundary. More precisely, we  obtain improved inequalities of the form
\begin{equation}
\label{frac-intro}
\inf_{c\in\R}\|f(y) - c\|_{L^q(\O, d^a)}
\le C \left\{
\int_\O\int_{|x-z|\le d(x)/2}\frac{|f(z)-f(x)|^p}{|z-x|^{n+sp}} \, \delta(x,z)^{b} dz dx
\right\}^{1/p}
\end{equation}
where $d(x):=dist(x,\partial\O)$,  $\delta(x,z):=\min\{d(x),d(z)\}$, $\Omega\subset \R^n$ is a John or $\beta$-John domain and the parameters satisfy appropriate restrictions. The reader will remark that  the  domain of integration on the left corresponds to the choice $\tau=\frac12$ in the notation of \eqref{spoincf2-intro}; this is to simplify notation, we could have chosen any $\tau\in(0,1)$ as it will be clear from the proof. We also remark that the term ``improved'' used in \cite{D} refers to the use of the stronger seminorm as in \eqref{poincfd-intro}, while in this paper we use it to emphasize the presence of powers of the distance to the boundary as weights, as it is customary in the integer case.

Our technique consists in extending the arguments used in our work \cite{DD} to the fractional case. The key starting point in that paper was the estimate
$$
|f(y)-\bar f|
\le C\int_{|x-y|\le C_1 d(x)}\frac{|\nabla f(x)|}{|x-y|^{n-1}}\,dx
$$
where $\O$ is a John domain, $f\in C^\infty (\O)$ and $\bar f$ is an appropriate average of $f$.
The idea of recovering $f$ from its gradient to prove Sobolev-Poincar\'e inequalities is present
in several authors, for instance, \cite{M, Ha, H}, but it is essential for our argument in \cite{DD}
that the fractional integral of the gradient be restricted to the region $|x-y|<C_1 d(x)$,
a fact that we believe is not exploited in other proofs. In this paper we give a generalization
of this representation to the fractional case in the case of John and $\beta$-John domains,
that can also be of independent interest. We also consider separately the case of
H\"older-$\alpha$ domains, which belong to the class of $\beta$-John domains with $\beta=1/\alpha$
but are known to have better embedding properties, see e.g. \cite{BS, KM}.

To the best of our knowledge, the fractional inequalities for $\beta$-John domains are new even in the unweighted case, and the weighted inequalities are new even in the case of Lipschitz domains. Indeed, although the generalization to weighted norms on both sides of the inequality is quite natural and along the lines of the results for improved Poincar\'e inequalities involving the gradient found  in \cite{CW1, CW2, HK, KM},  we believe that the only antecedent of these weighted fractional inequalities is found in \cite[Proposition 4.7]{AB}, where \eqref{frac-intro} is obtained in a star-shaped domain in the case $p=q=2$, $a=0$ and $b<2s$ (their proof remains unchanged for John domains but  does not cover the case $b=2s$ where the inequality also holds, see \cite[Remark 4.8]{AB} and Theorem \ref{teorema en John} below). Moreover, the results we obtain are sharp in the case of John domains and H\"older-$\alpha$ domains, and almost sharp (except at the endpoint) for $\beta$-John domains, and we provide counterexamples to support this statement.

The rest of the paper is as follows: in Section 2 we recall some necessary definitions and preliminaries; in Section 3 we obtain the fractional representation in John domains and use it to obtain the improved inequalities; in Section 4 we obtain the fractional representation in $\beta$-John domains and use it to obtain the improved inequalities, and we discuss their optimality; finally, in Section 5 we consider the special case of H\"older-$\alpha$ domains, also discussing the optimality of our result.

\section{Notation and preliminaries}

Throughout the paper, $\O\subset \R^n$ ($n\ge 2$) will be a bounded domain and $d(x)$ will denote the distance of a point $x\in\O$ to the boundary of $\O$. We will assume,  without loss of generality, that $0\in\O$.

As it is customary, $C$  will denote a positive constant that may change even within a single string of inequalities, and functions $f$ defined in $\O$ will be extended by zero outside $\O$ whenever needed.

For completeness, we include the following elementary result mentioned in the introduction:
\begin{prop}
The fractional Poincar\'e inequality
$$
\inf_{c\in \R} \|f-c\|_{L^p(\O)} \le \left\{ \frac{diam(\O)^{n+sp}}{|\O|} \int_\O \int_\O \frac{|f(y)-f(x)|^p}{|y-x|^{n+sp}} \, dy \, dx \right\}^{1/p}
$$
holds for any bounded  domain $\O\subset \R^n$.
\end{prop}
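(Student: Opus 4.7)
The plan is to choose the constant $c$ to be the mean value of $f$ over $\Omega$, namely
$$
c = \frac{1}{|\O|}\int_\O f(x)\,dx,
$$
and then estimate $\|f-c\|_{L^p(\O)}^p$ by a direct application of Jensen's inequality followed by a trivial insertion of the kernel $|y-x|^{n+sp}$.

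First I would write
$$
f(y) - c = \frac{1}{|\O|}\int_\O \bigl(f(y)-f(x)\bigr)\,dx,
$$
apply Jensen (or H\"older with exponents $p$ and $p'$ against the uniform probability measure $dx/|\O|$) to get
$$
|f(y)-c|^p \le \frac{1}{|\O|}\int_\O |f(y)-f(x)|^p\,dx,
$$
and then integrate in $y$ over $\O$ to obtain
$$
\|f-c\|_{L^p(\O)}^p \le \frac{1}{|\O|}\int_\O\int_\O |f(y)-f(x)|^p\,dy\,dx.
$$

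The second step is to reintroduce the fractional kernel. Since $\O$ is bounded we have $|y-x|\le \mathrm{diam}(\O)$ for all $x,y\in\O$, so
$$
|f(y)-f(x)|^p = \frac{|f(y)-f(x)|^p}{|y-x|^{n+sp}}\,|y-x|^{n+sp} \le \mathrm{diam}(\O)^{n+sp}\,\frac{|f(y)-f(x)|^p}{|y-x|^{n+sp}}.
$$
Substituting this bound into the previous display and taking $p$-th roots yields exactly the stated inequality.

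There is no real obstacle here: the argument only uses boundedness of $\O$, Jensen's inequality, and the pointwise bound on $|y-x|$. The point of the proposition is precisely to contrast this easy unrestricted fractional estimate with the stronger form \eqref{poincfd-intro}, whose validity genuinely requires geometric hypotheses on $\O$.
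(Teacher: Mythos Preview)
Your proof is correct and follows essentially the same approach as the paper: choose $c$ to be the average $f_\O$, bound $\|f-f_\O\|_{L^p(\O)}^p$ by $\frac{1}{|\O|}\int_\O\int_\O |f(y)-f(x)|^p\,dy\,dx$, and then insert the kernel using $|y-x|\le\mathrm{diam}(\O)$. The only cosmetic difference is that the paper reaches the double-integral bound via Minkowski's integral inequality followed by H\"older, whereas you apply Jensen's inequality pointwise in $y$ and then integrate; both routes yield the identical estimate.
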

\begin{proof}
Let $f_\O= \frac{1}{|\O|}\int_\O f(x) \, dx$. Then, by Minkowski's integral inequality,
\begin{align*}
\|f-f_\O\|_{L^p(\O)}&=\left\|\frac{1}{|\O|} \int_\O (f(y) - f(x)) \, dx\right\|_{L^p} \\
&\le \frac{1}{|\O|} \int_\O \Big( \int_\O |f(y)-f(x)|^p \, dy \Big)^\frac{1}{p} \, dx
\end{align*}
Hence, by H\"older's inequality,
\begin{align*}
\|f-f_\O\|_{L^p(\O)}^p &\le \frac{1}{|\O|} \int_\O \int_\O |f(y)-f(x)|^p \, dy \, dx \\
&\le  \frac{diam(\O)^{n+sp}}{|\O|} \int_\O \int_\O \frac{|f(y)-f(x)|^p}{|y-x|^{n+sp}} \, dy \, dx
\end{align*}
\end{proof}
We remark that the constant in the previous inequality is  far from being sharp, see for instance \cite[Theorem 1]{BBM} for the best constant when $\O$ is a cube.

We will use the following definition of $\beta$-John domains:

\begin{defi}
A bounded domain $\O\subset\R^n$ is a $\beta$-John domain ($\beta\ge 1$) if there exists a family of
rectifiable curves given by $\g(t,y)$, $0\le t\le 1$, $y\in\O$,
and positive constants $\lambda$, $K$ and $C$ such that,

\begin{enumerate}
\item $\gamma(0,y)=y$, $\gamma(1,y)=0$
\medskip
\item  $d(\gamma(t,y))\ge \lambda t^\beta$ \label{prop2}
\medskip
\item  $|\dot\gamma(t,y))|\le K$
\medskip
\item  for all $x,y \in \Omega$ and $r\le \frac12 d(x)$, there holds $l(\gamma(y)\cap B(x,r))\le Cr
$, where $\gamma(y)$ denotes the curve joining $0$ with $y$, and $l$ the length. \label{prop5}
\end{enumerate}
When $\beta=1$, we will simply refer to John domains, instead of 1-John domains.
\end{defi}

\begin{remark}
The above definition is not the usual one, which includes only properties (1), (2) and (3).
However, it can be seen that the curves can be chosen to make property (4) hold (see \cite[Section 2]{DMRT}).
\end{remark}

\section{The case of John domains}

In this section we obtain a representation that allows us to estimate $f$ in terms of its fractional derivative, and use it to obtain the inequalities in John domains. These are split in two theorems, since the case $p=1$ requires a ``weak implies strong'' argument that we develop separately. The inequalities are sharp
(as will be seen in Theorem \ref{mushroom}) and, to the best of our knowledge, they are new even in the case of Lipschitz domains.

\begin{prop}
\label{representacion fraccionaria}
Given $s\in(0,1)$, $1\le p<\infty$, and $f\in C^\infty(\O)$ we have
$$
|f(y) - \bar f|
\le C \int_{|y-x|\le C_1d(x)} \frac{h(x)}{|x-y|^{n-s}}\, dx
$$
where $\bar f$ is a constant and
$$
h(x)
:=\left(\int_{|x-w|\le d(x)/2} \frac{|f(w)-f(x)|^p}{|w-x|^{n+sp}} \, dw\right)^\frac{1}{p}
$$
for $y\in\O$, $h\equiv 0$ outside $\O$, and $C$ and $C_1$ are positive
constants depending only on $n$ and $\O$.
\end{prop}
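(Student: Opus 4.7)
The plan is to mimic the classical John-chain argument from \cite{DD}, telescoping $f(y)-\bar f$ along a chain of balls following the John curve $\gamma(\cdot,y)$, but replacing the pointwise bound by the gradient with a fractional Poincar\'e-type bound that produces $h$.

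First I would build a chain of balls $B_k=B(x_k,r_k)$ with $x_k$ on $\gamma(y)$, for $k=0,1,\ldots,N$, such that (i) $r_k\le c\,d(x_k)$ with $c$ small enough to guarantee $B_k\subset B(x,d(x)/2)$ for every $x\in B_k$; (ii) consecutive balls overlap substantially, $B_{k+1}\subset B_k^*:=3B_k$ and $|B_k|\sim|B_{k+1}|$; (iii) the radii grow geometrically from $r_0\sim d(y)$ up to $r_N\sim d(0)$; and (iv) each point $x\in\O$ belongs to only a bounded number of enlarged balls $B_k^*$, with $r_k\sim|x-y|$ for every $k$ such that $x\in B_k^*$. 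The existence of such a chain follows from properties (2)--(4) of the John curve; property (4) is precisely what yields the bounded overlap in (iv). I would set $\bar f := f_{B_N}$, and add at the beginning of the chain a sequence of shrinking balls at $y$ so that $f(y)=\lim f_{B_k}$ as $k\to -\infty$ (this is trivial since $f\in C^\infty$).

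Next I would telescope
\[
f(y)-\bar f=\sum_{k}\bigl(f_{B_k}-f_{B_{k+1}}\bigr),
\]
and estimate each difference as follows. On each $B_k^*$ (still satisfying $B_k^*\subset B(x,d(x)/2)$ for $x\in B_k^*$), the definition of $h$ gives $\int_{B_k^*}|f(w)-f(x)|^p/|w-x|^{n+sp}\,dw\le h(x)^p$. For $p>1$ I apply H\"older with exponents $p,p'$ against the weight $|w-x|^{n+sp}$ and the trivial bound $|w-x|\le 2r_k$ on $B_k^*$ to get $\int_{B_k^*}|f(w)-f(x)|\,dw\le C r_k^{n+s}h(x)$; for $p=1$ the same bound follows directly without H\"older. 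Averaging in $x$ over $B_k^*$ produces
\[
\frac{1}{|B_k^*|}\int_{B_k^*}|f(x)-f_{B_k^*}|\,dx\le C r_k^{s}\,\frac{1}{|B_k^*|}\int_{B_k^*}h(x)\,dx,
\]
and by the overlap property (ii),
\[
|f_{B_k}-f_{B_{k+1}}|\le \frac{Cr_k^{s}}{|B_k^*|}\int_{B_k^*}h(x)\,dx.
\]

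Finally I would sum over $k$ and rewrite the result as an integral against the kernel
\[
K(x,y):=C\sum_k \frac{r_k^{s}}{|B_k^*|}\,\chi_{B_k^*}(x).
\]
By (iv), for each $x$ only boundedly many $k$ contribute, and for every such $k$ one has $r_k\sim|x-y|$ and $|B_k^*|\sim|x-y|^n$, which gives $K(x,y)\le C|x-y|^{s-n}$. Moreover, $x\in B_k^*$ forces $d(x)\ge d(x_k)-Cr_k\gtrsim r_k\gtrsim |x-y|$, so $K(x,y)$ is supported in $\{|x-y|\le C_1 d(x)\}$, completing the proof.

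The main obstacle is Step~1: producing a chain with the bounded-overlap property (iv) so that the kernel $K$ really does sum to the restricted Riesz kernel. Property~(4) of the John curves is tailor-made for this, and exploiting it (as in \cite{DMRT,DD}) is the crux of the argument; the rest of the proof is a fractional variant of the standard telescoping, with the weighted H\"older step designed so that the seminorm appearing on the right is exactly the strong, locally restricted one defining $h$.
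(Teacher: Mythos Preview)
Your approach is correct in spirit but genuinely different from the paper's. The paper does \emph{not} build a discrete chain of balls; instead it runs a continuous argument: it sets $u(x,t)=(f*\varphi_t)(x)$ for a bump $\varphi$, writes $f(y)-\bar f$ via the fundamental theorem of calculus applied to $\eta(t)=u(\gamma(t,y)+tz,t)$, averages in $z$, changes variables $\gamma(t,y)+tz=x$, subtracts $f(x)$ using $\int\nabla\varphi_t=0$, and applies H\"older in the $w$-variable to produce $h(x)$ together with the kernel $|x-y|^{s-n}$. The localisation $|x-y|\le C_1 d(x)$ and $|x-w|\le d(x)/2$ drop out of the support of $\varphi$ and the John condition $d(\gamma(t,y))\ge\lambda t$; property~(4) of the curve is not used at all in the John case. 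Your chain/telescoping route is the more classical Boman-chain strategy; it is more elementary and closer to what one finds in \cite{HV,D}, while the paper's convolution method is tailored to extend cleanly to the $\beta$-John and H\"older-$\alpha$ situations treated later.

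One point in your sketch needs sharpening. Claim~(iv) as stated --- that each $x$ lies in only boundedly many $B_k^*$ and that $r_k\sim|x-y|$ for all such $k$ --- is generally too strong: a twisty John curve can revisit a neighbourhood of $x$ at many dyadic scales, so the number of contributing $k$ can be of order $\log(d(x)/|x-y|)$. What property~(4) actually gives you is bounded overlap \emph{at each dyadic scale}: for $r_k\in[2^j,2^{j+1})$ the centres $x_k$ with $x\in B_k^*$ lie on a piece of $\gamma(y)$ inside $B(x,C2^j)$ of length $\le C'2^j$, hence there are $O(1)$ of them. Since $x\in B_k^*$ forces $r_k\ge c|x-y|$, your kernel satisfies
\[
K(x,y)\le C\sum_{2^j\ge c|x-y|} 2^{j(s-n)}\le C|x-y|^{s-n},
\]
which is what you need. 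With this correction the argument goes through.
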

\begin{proof}
Take $\varphi\in C^1_0(B(0, \lambda/2))$
such that $\int\varphi=1$ and define
$$
u(x,t) =(f * \varphi_t)(x)
$$
and
$$
\eta(t)=u(\gamma(t,y)+tz , t).
$$
Observe that the curve $\g(t,y)+tz$ is contained in $\O$ whenever $z\in B(0,\lambda/2)$.
Indeed, in this case
$|\g(t,y)+tz-\g(t,y)|\le t\lambda/2 < d(\g(t,y))$.

Then,
\begin{align*}
&f(y) - (f*\varphi)(z) = u(y,0) - u(z,1)
= \eta(0) - \eta(1) = - \int_0^1 \eta'(t)\, dt \\
&= - \int_0^1 \nabla u(\g(t,y)+tz , t) \cdot (\dot \g (t, z) + z)
+ u_t (\g(t,y) + tz, t) \, dt
\end{align*}

Multiplying by $\varphi(z)$, integrating in $z$ and defining $\bar f=\int (f*\varphi)(z)\varphi(z)dz$
we have

\begin{align*}
f(y) - \bar f &= \int_{\R^n} (f(y) - (f*\varphi)(z)) \varphi(z)  \, dz \\
&= - \int_{\R^n} \int_0^1 \nabla u(\gamma(t,y)+tz , t) \cdot (\dot \g(t,y) + z) \varphi(z) \, dt dz \\
& \quad - \int_{\R^n} \int_0^1 \frac{\partial u}{\partial t}(\g(t,y) + tz, t)  \varphi(z) \, dt dz \\
&= - (I + II)
\end{align*}

Making the change of variables $\g(t,y) + tz = x$ and using that
$$
\nabla u= f * \nabla(\varphi_t)
$$
and
$$
\nabla(\varphi_t)(x)
=\frac{1}{t^{n+1}} \nabla\varphi\Big(\frac{x}{t}\Big)
$$
we obtain

\begin{align*}
I &= \int_0^1 \int_{\R^n}
\int_{\R^n}  \frac{f(w)}{t^{n+1}}
\nabla\varphi\Big(\frac{x-w}{t}\Big) \, dw
\cdot
\left( \dot \g(t,y) + \frac{x-\g(t,y)}{t}\right) \varphi\Big(\frac{x-\g(t,y)}{t}\Big) \, dx
\frac{dt}{t^n}
\end{align*}

Observe that, since the support of $\varphi$ is contained in $B(0,\lambda/2)$, the integrand
vanishes unless $|x-\gamma(t,y)|\le \lambda t/2$ which implies
\begin{equation}
\label{cota x menos y 1}
|x-y|\le |x-\gamma(t,y)| + |\gamma(t,y)-y|\le \frac{\lambda t}2 + \sqrt{n}Kt .
\end{equation}
Then we can restrict the integral to $t>c|x-y|$ with a constant $c$ depending
only on $K$, $\lambda$ and $n$.

On the other hand, using that
$$
\int \frac{1}{t^{n+1}} \nabla\varphi\Big(\frac{x-w}{t}\Big) \, dw =0
$$
we can subtract $f(x)$ in the integral with respect to $w$. Then,
changing the order of integration between $t$ and $x$
and using that
$$
\left|\dot \g(t,y) + \frac{x-\gamma(t,y)}{t} \right|\le K+\frac{\lambda}2 ,
$$
we obtain

$$
I \le  C  \int_{\R^n} \int_{c|x-y|}^1
\int_{|x-w|\le\lambda t/2}
\frac{|f(w)-f(x)|}{t^{n+1}} \left|\nabla\varphi \Big(\frac{x-w}{t}\Big)\right| \left|\varphi \Big(\frac{x-\gamma(t,y)}{t}\Big)\right| \, dw
\,  \frac{dt}{t^n} dx
$$
with a constant $C$ depending only on $K$ and $\lambda$.

Now observe that
$$
d(\gamma(t,y))\le |\gamma(t,y)-x| + d(x)  \le \frac{\lambda t}{2} + d(x)
\le \frac{d(\gamma(t,y))}{2} + d(x)
$$
and so
$$
|x-w| \le \frac{\lambda t}{2} \le \frac{d(\gamma(t,y))}{2} \le \frac{d(x)}2.
$$
In particular $\lambda t/2\le d(x)/2$ which combined with \eqref{cota x menos y 1} gives
$$
|x-y|\le C_1 d(x)
$$
with a constant $C_1$ depending only on $K$, $\lambda$  and $\|\varphi\|_\infty$.
Consequently,
\begin{align*}
I &\le  C \int_{|x-y|\le C_1 d(x)} \int_{c|x-y|}^1
\int_{|x-w|\le d(x)/2} \frac{|f(w)-f(x)|}{|w-x|^{\frac{n}{p}+s}}
\frac{1}{t^{n+\frac{n}{p'}+1-s}}\left|\nabla\varphi \Big(\frac{x-w}{t}\Big)\right|
\, dw \, dt \,dx \\
&\le C \int_{|x-y|\le C_1 d(x)} \int_{c|x-y|}^1
\left( \int_{|x-w|\le d(x)/2} \frac{|f(w)-f(x)|^p}{|w-x|^{n+sp}} \, dw
\right)^\frac{1}{p} \left( \int_{\R^n} \left|\nabla\varphi
\Big(\frac{x-w}{t}\Big)\right|^{p'} \, dw  \right)^\frac{1}{p'}\\
&\quad\cdot \frac{1}{t^{n+\frac{n}{p'}+1-s}} \, dt \,dx
\end{align*}
where we have used $|x-w|\le \lambda t/2$ to bound the integrand.

Therefore, since
$$
\left( \int_{\R^n} \left|\nabla\varphi
\Big(\frac{x-w}{t}\Big)\right|^{p'} \, dw  \right)^\frac{1}{p'}
=\|\nabla\varphi\|_{p'} t^{\frac{n}{p'}}
$$
we conclude that
$$
I \le  C  \int_{|x-y|\le C_1 d(x)} \int_{c|x-y|}^1 h(x) \frac{1}{t^{n+1-s}} \, dt \,dx
\le  C  \int_{|x-y|\le C_1 d(x)} \frac{h(x)}{|x-y|^{n-s}}   \,dx
$$
where the new constant depends also on $\|\nabla\varphi\|_{p'}$.

To estimate $II$ we proceed in a similar way. Indeed, since $\int\varphi_t(x)dx=1$ for all
$t$, we have $\int\frac{\partial\varphi_t}{\partial t}(x)dx=0$. Moreover, a straightforward
computation shows that
$$
\frac{\partial\varphi_t}{\partial t}(x)
=\frac1{t^{n+1}}\psi\Big(\frac{x}{t}\Big)
$$
where $\psi :=-n\varphi-x\cdot\nabla\varphi$. Therefore, repeating the arguments
that we used to bound $I$ we obtain,
$$
II \le  C  \int_{|x-y|\le C_1 d(x)} \int_{c|x-y|}^1
\int_{|x-w|\le d(x)/2}
\frac{|f(w)-f(x)|}{t^{n+1}} \left|\psi\Big(\frac{x-w}{t}\Big)\right| \, dw
\,  \frac{dt}{t^n} dx
$$
and consequently
$$
II \le  C  \int_{|x-y|\le C_1 d(x)} \frac{h(x)}{|x-y|^{n-s}}   \,dx
$$
\end{proof}

In the proof of the next Theorem we will make use of the following well known result
(see, e.g., \cite[Lemma 2.8.3]{Zi}).

\begin{lemma}
\label{maximal}
Let $Mg$ be the Hardy-Littlewood maximal function of $g$. Given $0<\sigma<n$
there exists a positive constant $C$, depending only on $n$ and $\sigma$, such that, for any $\varepsilon>0$,
$$
\int_{|y-x|\le\varepsilon} \frac{|g(y)|}{|x-y|^{n-\sigma}} dy
\le C \varepsilon^{\sigma} Mg(x)
$$
\end{lemma}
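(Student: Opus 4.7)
The plan is to carry out the standard dyadic annular decomposition of the ball $B(x,\varepsilon)$ and bound each piece using the definition of the Hardy--Littlewood maximal function. Write $A_k=\{y:2^{-k-1}\varepsilon<|y-x|\le 2^{-k}\varepsilon\}$ for $k\ge 0$, so that the ball decomposes as $B(x,\varepsilon)=\bigsqcup_{k\ge 0} A_k$ up to a set of measure zero. On $A_k$ the kernel is controlled by a constant, namely $|x-y|^{-(n-\sigma)}\le (2^{-k-1}\varepsilon)^{-(n-\sigma)}$.

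Next, I would estimate the integral of $|g|$ on each annulus simply by the integral on the slightly larger ball $B(x,2^{-k}\varepsilon)$, which by definition of $Mg$ is at most $c_n(2^{-k}\varepsilon)^n Mg(x)$. Multiplying this by the pointwise kernel bound above yields
\[
\int_{A_k}\frac{|g(y)|}{|x-y|^{n-\sigma}}\,dy \;\le\; C_n\,2^{(k+1)(n-\sigma)}\varepsilon^{-(n-\sigma)}\,(2^{-k}\varepsilon)^n\,Mg(x) \;=\; C\,\varepsilon^\sigma\,2^{-k\sigma}\,Mg(x),
\]
with $C$ depending only on $n$ and $\sigma$.

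Summing over $k\ge 0$ uses the geometric series $\sum_{k\ge 0}2^{-k\sigma}=(1-2^{-\sigma})^{-1}$, which converges precisely because $\sigma>0$; this is the only place where the positivity of $\sigma$ enters, and it is the only mild obstacle, as the hypothesis $\sigma<n$ serves merely to ensure the pointwise integrability of the kernel near the origin (implicit in the use of $Mg(x)$). Combining the geometric sum with the previous display gives the desired bound
\[
\int_{|y-x|\le\varepsilon}\frac{|g(y)|}{|x-y|^{n-\sigma}}\,dy\le C\,\varepsilon^\sigma Mg(x),
\]
completing the argument.
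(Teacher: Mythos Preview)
Your argument is correct and is the standard dyadic annular decomposition proof of this classical estimate. The paper does not actually prove this lemma; it simply quotes it as a well-known result with a reference to \cite[Lemma~2.8.3]{Zi}, and your proof is essentially the one found there.
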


\begin{theorem}
\label{teorema en John}
Let $\Omega\subset\R^n$ be a John domain,  $1< p\le q<\infty$,  $a\ge 0$, $b\le \frac{(n+a)p}{q}+sp-n$ and,
additionally, $q\le \frac{np}{n-sp}$ when $p<\frac{n}{s}$. Then, given $s\in(0,1)$ and $f\in C^\infty(\O)$ we have
$$
\inf_{c\in \R}\|f(y) - c\|_{L^q(\O, d^a)}
\le C \left[
\int_\O\int_{|x-z|\le d(x)/2}\frac{|f(z)-f(x)|^p}{|z-x|^{n+sp}}\delta(x,z)^b dz dx\right]^{1/p}
$$
where $\delta(x,z):=\min\{d(x),d(z)\}$.
\end{theorem}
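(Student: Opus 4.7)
The plan is to combine Proposition~\ref{representacion fraccionaria} with a Stein--Weiss type weighted Hardy--Littlewood--Sobolev estimate for the resulting truncated Riesz-type potential. Taking $c=\bar f$ as in Proposition~\ref{representacion fraccionaria} yields $|f(y)-\bar f|\le C\,Th(y)$ with
$$
Th(y):=\int_{|y-x|\le C_1 d(x)}\frac{h(x)}{|x-y|^{n-s}}\,dx .
$$
On the set $\{w:|x-w|\le d(x)/2\}$ used to define $h(x)$ we have $d(x)/2\le d(w)\le 3d(x)/2$, so $\delta(x,w)\sim d(x)$; hence the right-hand side of the theorem is comparable, with constant depending only on $b$, to $\|h\,d^{b/p}\|_{L^p(\Omega)}$, and it suffices to prove
$$
\|Th\|_{L^q(\Omega,d^a)} \le C\,\|h\,d^{b/p}\|_{L^p(\Omega)}.
$$

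The key geometric observation is that on $\{x:|x-y|\le C_1 d(x)\}$ one has $d(x)\sim d(y)$. The bound $d(y)\le(1+C_1)d(x)$ is immediate, while the reverse can be arranged either by ensuring $C_1<1$ through an appropriate choice of $\lambda$ and $\varphi$ in Proposition~\ref{representacion fraccionaria}, or, if this is not possible, by a dyadic decomposition in the ratio $d(x)/|x-y|$. Setting $g:=h\,d^{b/p}\chi_\Omega$ and extracting the comparable factor $d(x)^{b/p}$ from the integral,
$$
Th(y) \le C\,d(y)^{-b/p}\int_{|x-y|\le C'd(y)}\frac{g(x)}{|x-y|^{n-s}}\,dx \le C\,d(y)^{-b/p}\,I_s g(y),
$$
where $I_s$ is the Riesz potential of order $s$. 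The problem therefore reduces once more, this time to the weighted estimate
$$
\bigl\|d^{a/q-b/p}\,I_s g\bigr\|_{L^q(\Omega)} \le C\,\|g\|_{L^p(\Omega)}.
$$

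A direct scaling computation shows that the equality case $b=(n+a)p/q+sp-n$ is exactly the Stein--Weiss scaling $1/p-1/q=(s-(b/p-a/q))/n$; the strict-inequality case is recovered from the equality case via H\"older's inequality and $|\Omega|<\infty$, while $q\le np/(n-sp)$ is the usual Hardy--Littlewood--Sobolev exponent restriction. For $p=q$ the hypothesis forces $a/q-b/p+s\ge 0$; combining Lemma~\ref{maximal} (with $\sigma=s$ and $\varepsilon=C'd(y)$) with $d\in L^\infty(\Omega)$ then reduces the estimate to the $L^p$-boundedness of the Hardy--Littlewood maximal operator, which holds since $p>1$. For $p<q$ I would invoke the classical Stein--Weiss inequality for $I_s$ on $\R^n$ after extending $g$ by zero, controlling the boundary-distance weight $d(x)$ by $|x-x_0|$ for a distinguished interior point $x_0$ through a Whitney chain adapted to the John structure of $\Omega$, along the lines of \cite{DD}.

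The main difficulty lies in this final step in the range $p<q$: the weighted Hardy--Littlewood--Sobolev inequality with powers of the distance to the boundary is not an immediate consequence of the Euclidean Stein--Weiss theorem, and its verification is where the John property of $\Omega$ genuinely enters.
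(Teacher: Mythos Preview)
Your overall strategy---apply Proposition~\ref{representacion fraccionaria} and reduce to a mapping estimate for the truncated potential $Th$---is the same as the paper's, and the identification of the right-hand side with $\|h\,d^{b/p}\|_{L^p(\Omega)}$ is correct. But there is a genuine gap in the case $p<q$, and a related unnecessary difficulty you have introduced for all $p$.

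The two-sided comparability $d(x)\sim d(y)$ on $\{|x-y|\le C_1 d(x)\}$ holds only when $C_1<1$, and $C_1$ is determined by the John constants $K,\lambda$ of $\Omega$ (roughly $C_1\sim 1/2+\sqrt{n}K/\lambda$); you cannot force $C_1<1$ by choosing $\varphi$. Only the one-sided bound $d(y)\le(1+C_1)d(x)$ is available in general, so you should not pass to a region $\{|x-y|\le C'd(y)\}$ nor extract $d(y)^{-b/p}$ as you do. More seriously, having reduced to $\|d^{a/q-b/p}I_s g\|_{L^q(\Omega)}\le C\|g\|_{L^p}$ for $p<q$, you propose to control $d(\cdot)$ by $|\cdot-x_0|$ and invoke Stein--Weiss. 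At the endpoint the exponent is $a/q-b/p=n/p-n/q-s\le 0$, so you would need $d(y)^{-\gamma}\le C|y-x_0|^{-\gamma}$, i.e.\ $|y-x_0|\le C\,d(y)$; this fails near $\partial\Omega$ away from $x_0$, John condition or not. You flag this as ``the main difficulty,'' but it is the whole proof, and the suggested route does not work.

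The paper avoids both issues by arguing by duality and keeping the $d(x)$-truncation. Pairing with $g\in L^{q'}(\Omega,d^a)$ and using only the one-sided bounds $d(y)\le(1+C_1)d(x)$ (with $a\ge 0$) and $|x-y|\le C_1 d(x)$ (with $\eta:=b/p-a/q\ge 0$) one obtains
\[
\int_\Omega |f-\bar f|\,g\,d^a
\;\le\; C\int_\Omega h(x)\,d(x)^{b/p}\,I_{s-\eta}\bigl(g\,d^{a/q'}\bigr)(x)\,dx
\;\le\; C\,\bigl\|h\,d^{b/p}\bigr\|_{L^p}\,\bigl\|I_{s-\eta}(g\,d^{a/q'})\bigr\|_{L^{p'}},
\]
and the \emph{unweighted} Hardy--Littlewood--Sobolev bound $I_{s-\eta}:L^{q'}\to L^{p'}$ (valid since $1/p-1/q=(s-\eta)/n$ and $0<s-\eta\le s$) finishes the argument; the case $p=q$ is handled the same way with Lemma~\ref{maximal} in place of $I_{s-\eta}$. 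No weighted Stein--Weiss inequality and no further use of the John geometry beyond Proposition~\ref{representacion fraccionaria} is required.
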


\begin{proof}
We proceed by duality. Let $g\in L^{q'}(\Omega, d^a)$ such that  $\|g\|_{L^{q'}(\O, d^a)}=\|g d^\frac{a}{q'}\|_{L^{q'}(\O)}=1$. Interchanging the order of integration
and using Proposition \ref{representacion fraccionaria}
we have
\begin{equation}
\label{dualidad}
\begin{aligned}
\int_\O |f(y) - \bar f| g(y) d(y)^a dy
&\le C \int_\O \int_{|y-x|\le C_1d(x)} \frac{|g(y)|}{|x-y|^{n-s}} d(y)^{\frac{a}{q}+\frac{a}{q'}} dy \,h(x) dx\\
&\le C \int_\O \int_{|y-x|\le C_1d(x)} \frac{|g(y)|}{|x-y|^{n-s}} d(y)^\frac{a}{q'} dy \,h(x) d(x)^\frac{a}{q} dx\\
\end{aligned}
\end{equation}
where we have used that $d(y)\le |x-y|+d(x) \le (C_1+1) d(x)$ in the region of integration.

Now we consider separately the cases $p=q$ and $p<q$.

If $p=q$, it is clear that it suffices to prove the statement for $b=a+sp$. Using Lemma \ref{maximal} we have that
\begin{align}
\int_\O |f(y) - \bar f| g(y) d(y)^a dy
&\le C \int_\O d(x)^{s+\frac{a}{p}} M(g d^\frac{a}{p'})(x) h(x)  dx \nonumber\\
&\le C \|d^{s+\frac{a}{p}}h\|_{L^p(\O)}\|M(g d^\frac{a}{p'})\|_{L^{p'}(\O)}\label{cota maximal en John}
\end{align}

But,
$$
h(x)^p
=\int_{|x-z|\le d(x)/2} \frac{|f(z)-f(x)|^p}{|z-x|^{n+sp}} \, dz
$$
and then,
$$
\|d^{s+\frac{a}{p}}h\|^p_{L^p(\O)}
=\int_\O d(x)^{sp+a}
\int_{|x-z|\le d(x)/2}\frac{|f(z)-f(x)|^p}{|z-x|^{n+sp}} \, dz dx
$$
but in the domain of integration $d(x)\le 2d(z)$ and, therefore,
$$
\|d^{s+\frac{a}{p}}h\|^p_{L^p(\O)}
\le C\int_\O \int_{|x-z|\le d(x)/2}\delta(x,z)^{sp+a}
\frac{|f(z)-f(x)|^p}{|z-x|^{n+sp}} \, dz dx.
$$
Replacing this estimate in \eqref{cota maximal en John}
 we conclude the proof using the boundedness of the
maximal operator in $L^{p'}$ and the choice of $g$.

If $p<q$,  assume first that $\frac{(n+a)p}{n+b-sp}\le \frac{np}{n-sp}$. Then, for fixed $p,a,b$ it suffices to prove the theorem for $q=\frac{(n+a)p}{n+b-sp}$.
 If we define $\eta=\frac{b}{p}-\frac{a}{q}$ it follows from our assumptions that $0\le \eta <s$
  (the first inequality using that $\frac{(n+a)p}{n+b-sp}\le \frac{np}{n-sp}$ and the second one using that $p<\frac{(n+a)p}{n+b-sp}$). Therefore,  by  \eqref{dualidad} and using that $|x-y|\le C_1 d(x)$ we have
\begin{align}
\int_\O |f(y) - \bar f| g(y) d(y)^a dy
&\le C \int_\O d(x)^{\frac{b}{p}} I_{s-\eta}(g d^\frac{a}{q'})(x) h(x)  dx \nonumber\\
&\le C \|d^{\frac{b}{p}}h\|_{L^p(\O)}\|I_{s-\eta}(g d^\frac{a}{q'})\|_{L^{p'}(\O)}\label{cota int fraccionaria en John}
\end{align}
where $I_{\gamma}g(x)=\int \frac{g(y)}{|x-y|^{n-\gamma}} dy$ is the fractional integral
(or Riesz potential) of order $\gamma$ of $g$, provided $0<\gamma<n$.
Observe that, indeed, $0<s-\eta<n$ holds because $0\le \eta<s$ and $s\in (0,1)$.

As before,
\begin{align*}
 \|d^{\eta+\frac{a}{q}}h\|^p_{L^p(\O)}
&\le C\int_\O \int_{|x-z|\le d(x)/2}\delta(x,z)^{\eta p+\frac{ap}{q}}
\frac{|f(z)-f(x)|^p}{|z-x|^{n+sp}} \, dz dx\\
&= C\int_\O \int_{|x-z|\le d(x)/2}\delta(x,z)^b
\frac{|f(z)-f(x)|^p}{|z-x|^{n+sp}} \, dz dx.
\end{align*}

Using this estimate in \eqref{cota int fraccionaria en John} we conclude the proof using the boundedness of the fractional integral $I_{s-\eta}: L^{q'} \to L^{p'}$ for $\frac{1}{q}=\frac{1}{p}-\frac{s-\eta}{n}$ and the choice of $g.$

It remains to consider the case $p<q$,  $\frac{(n+a)p}{n+b-sp}> \frac{np}{n-sp}$. In this case, for fixed $p,a,b$ it suffices to consider $q=\frac{np}{n-sp}$. Then, we may bound
\begin{align*}
\int_\O |f(y) - \bar f| g(y) d(y)^a dy
&\le C \int_\O  I_{s}(g d^\frac{a}{q'})(x) h(x) d(x)^{\frac{a}{q}} dx \\
&\le C \|d^{\frac{a}{q}}h\|_{L^p(\O)}\|I_{s}(g d^\frac{a}{q'})\|_{L^{p'}(\O)}
\end{align*}
and conclude by using the boundedness of $I_{s}: L^{q'} \to L^{p'}$ for $\frac{1}{q}=\frac{1}{p}-\frac{s}{n}$ and the fact that  $\|d^{\frac{a}{q}}h\|_{L^p(\O)}\le C  \|d^{\frac{b}{p}}h\|_{L^p(\O)}$ because, under our assumptions, $\frac{b}{p}\le \frac{a}{q}$.
\end{proof}

For the case $p=1$ we will make use of the following ``weak implies strong" result. It is proved in \cite[Theorem 4.1]{D} in the case $\mu=\nu$, but the reader can easily check that the same proof holds for two different measures.

\begin{lemma}\label{weak-strong}
Let $\mu$ and $\nu$ be positive Borel measures on an open set $\Omega \subset \R^n$, such that $\mu(\Omega)<\infty$, $\nu(\Omega)<\infty$, let $0<s<1$ and $1\le p\le q<\infty$. Then the following conditions are equivalent:
\begin{enumerate}
\item There is a constant $C_1>0$ such that the inequality
$$
\inf_{c\in\R} \sup_{t>0} \mu(\{ x\in \Omega: |f(x)-c|>t\}) t^q  \le C_1 \Big(\int_\Omega
\int_{|x-y|<d(x)/2} \frac{|f(y)-f(z)|^p}{|y-z|^{n+sp}} d\nu(z) d\nu(y) \Big)^\frac{q}{p}
$$
for any $f\in C^\infty(\O)$.

\item There is a constant $C_2>0$ such that the inequality
$$
\inf_{c\in\R} \int_\Omega |f(x)-c|^q \, d\mu(x) \le C_2 \Big(\int_\Omega
\int_{|x-y|<d(x)/2} \frac{|f(y)-f(z)|^p}{|y-z|^{n+sp}} d\nu(z) d\nu(y) \Big)^\frac{q}{p}
$$
holds, for every $f\in C^\infty(\O)$.
\end{enumerate}
\end{lemma}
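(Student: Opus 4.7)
The implication (2)$\Rightarrow$(1) is immediate from Chebyshev's inequality applied to $|f-c|^q\,d\mu$, so the content lies in (1)$\Rightarrow$(2); my plan is a Maz'ya-style dyadic truncation argument adapted to the two-measure setting.

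Fix $c_0\in\R$ to be a median of $f$ with respect to $\mu$, and write $f-c_0=g^+-g^-$ with $g^\pm=(f-c_0)^\pm\ge 0$. By the median property, $\mu(\{g^\pm>0\})\le\mu(\O)/2$, and since $\int|f-c_0|^q\,d\mu\le 2^{q-1}\bigl(\int(g^+)^q\,d\mu+\int(g^-)^q\,d\mu\bigr)$, it suffices to bound $\int(g^+)^q\,d\mu$ (the argument for $g^-$ is identical). For $k\in\mathbb{Z}$ I set
$$
T_k(t):=\max\bigl(0,\min(t-2^{k-1},2^{k-1})\bigr), \qquad f_k:=T_k\circ g^+,
$$
so that $f_k\in[0,2^{k-1}]$, equals $2^{k-1}$ on $E_k:=\{g^+\ge 2^k\}$, vanishes on $\{g^+\le 2^{k-1}\}$, and $\sum_k f_k=g^+$ pointwise. (Since the $f_k$ are only Lipschitz, an approximation by mollification is implicit when applying hypothesis (1) to them.)

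A standard layer-cake decomposition gives $\int(g^+)^q\,d\mu\le C_q\sum_k 2^{kq}\mu(E_k)$, so it suffices to estimate each $2^{kq}\mu(E_k)$. A short case analysis on the constant $c$ in (1) yields
$$
\inf_{c\in\R}\sup_{t>0}\,t^q\mu(\{|f_k-c|>t\})\ \ge\ 2^{-2q}\,2^{kq}\mu(E_k):
$$
if $c\ge 2^{k-2}$, then $|f_k-c|\ge 2^{k-2}$ on $\{f_k=0\}$, a set of $\mu$-measure at least $\mu(\O)/2\ge\mu(E_k)$; if $c<2^{k-2}$, then $|f_k-c|>2^{k-2}$ on $E_k$. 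In either case the supremum over $t<2^{k-2}$ is at least $(2^{k-2})^q\mu(E_k)$. Plugging this lower bound into (1) applied to $f_k$ gives
$$
2^{kq}\mu(E_k)\ \le\ C\Bigl(\int_\O\int_{|y-z|<d(y)/2}\frac{|f_k(y)-f_k(z)|^p}{|y-z|^{n+sp}}\,d\nu(z)\,d\nu(y)\Bigr)^{q/p}.
$$

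It remains to sum in $k$. Two elementary ingredients do the job: first, since $q/p\ge 1$, for any $A_k\ge 0$ one has $\sum_k A_k^{q/p}\le(\sum_k A_k)^{q/p}$, which lets me pull the sum inside the $q/p$-th power and interchange it with the $\nu\otimes\nu$ integration; second, the telescoping identity $\sum_k(T_k(b)-T_k(a))=b-a$ for $0\le a\le b$, combined with $\sum_k r_k^p\le(\sum_k r_k)^p$ for $r_k\ge 0$ and $p\ge 1$, furnishes the crucial pointwise bound $\sum_k|f_k(y)-f_k(z)|^p\le|g^+(y)-g^+(z)|^p\le|f(y)-f(z)|^p$. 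Inserting this yields the strong inequality for $g^+$, and combining with the symmetric bound for $g^-$ completes the proof. The main delicate step is the weak-norm lower bound for $f_k$, where the median choice of $c_0$ is exactly what defeats the infimum over $c$; the rest is routine summation.
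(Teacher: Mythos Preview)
The paper does not supply its own proof of this lemma; it quotes \cite[Theorem~4.1]{D} and remarks that the argument there carries over verbatim when $\mu\neq\nu$. Your proof is a correct, self-contained implementation of the Maz'ya truncation method, which is exactly the technique used in \cite{D}, so your approach coincides with the one the paper defers to. The observation that the two measures play entirely separate roles---$\mu$ only on the left via the median/weak-norm lower bound, $\nu$ only on the right via the pointwise contraction $\sum_k|f_k(y)-f_k(z)|^p\le|f(y)-f(z)|^p$---is precisely why the single-measure proof extends with no change.

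One small remark on the smoothness point you flag: since $T_k$ already vanishes on $(-\infty,2^{k-1}]$, you have $T_k\bigl((f-c_0)^+\bigr)=T_k(f-c_0)$, so no positive-part cutoff is needed. You may then replace each $T_k$ by a smooth, nondecreasing, $1$-Lipschitz $\Psi_k$ supported on a slight enlargement of $[2^{k-1},2^k]$; then $\Psi_k(f-c_0)\in C^\infty(\Omega)$ and hypothesis~(1) applies directly. The telescoping bound survives up to a fixed constant because at each $t$ only boundedly many $\Psi_k'$ overlap, so $\sum_k|\Psi_k(b)-\Psi_k(a)|\le C|b-a|$. This avoids mollification in $\Omega$ and keeps the argument clean for arbitrary Borel $\nu$.
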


\begin{theorem}
\label{caso 1 en John}
Let $\Omega\subset\R^n$ be a John domain, $1\le q\le \frac{n}{n-s}$, $a\ge 0$,   and
 $b\le\frac{(n+a)}{q}-n+s$. Then, given $s\in(0,1)$ and $f\in C^\infty(\O)$ we have
$$
\inf_{c\in\R} \|f(y) - c\|_{L^q(\O, d^a)}
\le C
\int_\O\int_{|x-z|\le d(x)/2}\frac{|f(z)-f(x)|}{|z-x|^{n+s}}\delta(x,z)^b dz dx
$$
where $\delta(x,z):=\min\{d(x),d(z)\}$.
\end{theorem}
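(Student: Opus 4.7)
The proof adapts the scheme of the proof of Theorem \ref{teorema en John} (the $p>1$ case), with duality against $L^\infty$ on the dual side replaced by the weak-implies-strong equivalence of Lemma \ref{weak-strong}. Step 1: apply Proposition \ref{representacion fraccionaria} with $p=1$ to obtain
\[|f(y)-\bar f|\le C\int_{|y-x|\le C_1 d(x)} \frac{h_0(x)}{|x-y|^{n-s}}\,dx,\qquad h_0(x)=\int_{|x-w|\le d(x)/2}\frac{|f(w)-f(x)|}{|w-x|^{n+s}}\,dw.\]
Since $|y-x|\le C_1 d(x)$ in the region of integration, for any $\gamma\in[0,s)$ we may insert $d(x)^\gamma d(x)^{-\gamma}=1$ and absorb $d(x)^{-\gamma}\le C|x-y|^{-\gamma}$ to obtain $|f(y)-\bar f|\le C\,I_{s-\gamma}(h_0 d^\gamma)(y)$.

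Step 2: set up the weak-strong reduction with $d\mu(y)=d(y)^a\,dy$ and $d\nu(y)=d(y)^{b/2}\,dy$. Because $d(x)\sim d(z)$ whenever $|x-z|\le d(x)/2$, we have $\delta(x,z)^b\sim d(x)^{b/2}\,d(z)^{b/2}$, so the double integral appearing in Lemma \ref{weak-strong}'s RHS (for $p=1$, with these measures) is comparable to the RHS $R$ of the theorem. It therefore suffices to prove the weak-type inequality $\sup_{t>0}t^q\,\mu(\{|f-\bar f|>t\})\le C R^q$. For $q=1$ this follows directly from Step 1 by Fubini: $\int_{|y-x|\le C_1 d(x)}d(y)^a|x-y|^{s-n}\,dy\le Cd(x)^{a+s}$, and $d(x)^{a+s}\le Cd(x)^b$ because $d\le\mathrm{diam}(\Omega)$ and $a+s-b\ge 0$. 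For $1<q\le n/(n-s)$ I would choose $\gamma:=(n+a)/q-n+s\in[0,s)$, which satisfies $\gamma\ge b$ by hypothesis, so $\|h_0 d^\gamma\|_{L^1}\le C\|h_0 d^b\|_{L^1}\sim C R$; the weak-type bound then reduces to an estimate of the form
\[\|I_{s-\gamma}g\|_{L^{q,\infty}(d^a\,dy)}\le C\|g\|_{L^1(dy)}, \qquad q(n-s+\gamma)=n+a,\]
which is a Stein--Weiss-type inequality for distance-to-boundary weights.

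The main obstacle is this last weighted weak-type Riesz potential estimate: here the weight $d^a$ on the target must be used essentially, since the crude bound $d^a\le\mathrm{diam}(\Omega)^a$ would only yield the restricted range $b\le n/q-n+s$, missing the gain $a/q$ needed for the theorem's sharp condition $b\le(n+a)/q-n+s$. This estimate can be derived from Hedberg's inequality together with the weighted weak-$(1,1)$ estimate for the Hardy--Littlewood maximal operator (using that $d(y)^a$ is an $A_1$ weight on John domains for $a\ge 0$). Endpoint values of $q$ for which $\gamma$ reaches $s$ are handled by taking $\gamma=s-\epsilon$ and invoking $L^{r,\infty}\subset L^{q,\infty}$ on the finite-measure space $(\Omega,d^a\,dy)$.
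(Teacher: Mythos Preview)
Your overall architecture---apply Proposition \ref{representacion fraccionaria} with $p=1$, prove a weak-type bound, then invoke Lemma \ref{weak-strong} with $d\nu=d^{b/2}dx$---is exactly the paper's strategy, and your handling of $q=1$ is fine. The gap is in Step 2 for $q>1$.

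The weighted weak-type Riesz bound you reduce to,
\[
\|I_{s-\gamma}g\|_{L^{q,\infty}(\Omega,d^a)}\le C\|g\|_{L^1(\Omega)},\qquad q(n-s+\gamma)=n+a,
\]
is \emph{false} for $a>0$. Take $\Omega$ the unit ball and $g$ an approximation to $\delta_0$; then $I_{s-\gamma}g(y)\sim|y|^{-(n-s+\gamma)}$ near the origin, where $d(y)\sim 1$, so $d^a(\{I_{s-\gamma}g>t\})\sim t^{-n/(n-s+\gamma)}$ and $t^q\cdot t^{-n/(n-s+\gamma)}=t^{a/(n-s+\gamma)}\to\infty$. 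Your justification also fails: $d^a$ is \emph{not} an $A_1$ weight when $a>0$ (near $\partial\Omega$ the infimum of $d^a$ over any ball touching the boundary is $0$ while the average is positive), and even if it were, Hedberg plus weighted weak-$(1,1)$ would produce exponent $n/(n-s+\gamma)$ with $\|g\|_{L^1(d^a)}$ on the right, not the pair $(q,\|g\|_{L^1})$ you need.

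The point you are missing is that passing to the full Riesz potential discards the constraint $|x-y|\le C_1 d(x)$, which is precisely what makes the weight $d^a$ helpful. The paper keeps this constraint and estimates $\mu(E)$ directly: after Fubini one must bound $\int_{E\cap B(x,C_1d(x))}d(y)^a|x-y|^{s-n}\,dy$. Split into $|x-y|<d(x)/2$ (where $d(y)\sim d(x)$, reduce to the unweighted bound $\int_{E\cap B}|x-y|^{s-n}dy\le C|E\cap B|^{s/n}$, then reinsert the weight) and $|x-y|\ge d(x)/2$ (where $|x-y|^{s-n}\le Cd(x)^{s-n}$). In both pieces use the elementary interpolation $\mu(E\cap B(x,C_1d(x)))\le\mu(E)^{\theta}\mu(B(x,C_1d(x)))^{1-\theta}$ together with the key geometric fact $\mu(B(x,C_1d(x)))\le Cd(x)^{n+a}$; choosing $\theta$ appropriately in each region yields $\mu(E)\le C\mu(E)^{1/q'}t^{-1}\int h\,d^b$, hence the weak bound. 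The gain of $a/q$ in the exponent of $b$ comes from this ball-measure estimate, not from any global weighted potential theory.
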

 \begin{proof}

 If $q=1$ the result follows as in the previous proof.

 If $q>1$, it is clear that it suffices to prove our statement for $b=\frac{(n+a)}{q}-n+s$.
 For this purpose, we will prove a weak  inequality first. Hence, we  let $E=\{y\in \Omega : |f(y)-\bar f|>t\}$ and consider the measure $\mu$ such that $d\mu(x)=d(x)^a \, dx$.

Then,
 \begin{align*}
 \mu(E)&\le C \int_E \int_{|x-y|<C_1 d(x)} \frac{h(x)}{t |x-y|^{n-s}} \, dx  \,  d(y)^a \, dy\\
 &\le C \int_\O \frac{h(x)}{t} \int_{E \cap B(x, C_1d(x))} \frac{d(y)^a}{|x-y|^{n-s}} \, dy  \, dx\\
& = I_1 + I_2
 \end{align*}
 where $I_1$ corresponds to the region where $|x-y|<\frac{d(x)}{2}$ and $I_2$ to its complement.

 Observe that when $|x-y|<\frac{d(x)}{2}$, we have that $\frac{d(x)}{2}\le d(y)\le \frac32 d(x)$, so that
\begin{align*}
I_1 &\le C \int_{|x-y|< d(x)/2} \frac{h(x)}{t} \int_{E \cap B(x, C_1d(x))} \frac{1}{|x-y|^{n-s}} \, dy  \, d(x)^a \, dx\\
&\le C \int_{|x-y|< d(x)/2} \frac{h(x)}{t} |E \cap B(x, C_1d(x))|^\frac{s}{n}  \, d(x)^a \, dx\\
&\le C \int_{|x-y|< d(x)/2} \frac{h(x)}{t} \Big(\int_{E \cap B(x, C_1d(x))} \chi(y) d(y)^a \, dy \Big)^\frac{s}{n} \, d(x)^{a(1-\frac{s}{n})} \, dx\\
&= C \int_{|x-y|< d(x)/2} \frac{h(x)}{t} \mu(E \cap B(x, C_1d(x)))^\frac{s}{n} \, d(x)^{a(1-\frac{s}{n})} \, dx\\
&\le C \int_{|x-y|< d(x)/2} \frac{h(x)}{t} \mu(E)^\frac{\theta s}{n} \mu(B(x, C_1d(x)))^\frac{(1-\theta)s}{n} \, d(x)^{a(1-\frac{s}{n})} \, dx
\end{align*}
for any $0\le \theta\le 1$, where in the second step we have used a well-known result (see, e.g., \cite[formula (7.2.6)]{Jo}).

 Now, if we set $\theta=\frac{n}{sq'}$ and use that $\mu(B(x, C_1d(x)))\le d(x)^{n+a}$ we have
 \begin{align*}
I_1 &\le C \mu(E)^\frac{1}{q'}  \int_\O \frac{h(x)}{t} d(x)^b \, dx.
\end{align*}
So we only need to check that  $0\le \frac{n}{sq'} \le 1$, that holds because $1\le q\le \frac{n}{n-s}$.

 We proceed now to $I_2$. Using that $|x-y|\ge \frac{d(x)}{2}$ we have
 \begin{align*}
I_2 &\le C \int_\O \frac{h(x)}{t} \int_{E \cap B(x, C_1d(x))} \frac{d(y)^a}{d(x)^{n-s}} \, dy  \, dx\\
&= C \int_\O \frac{h(x) d(x)^{s-n}}{t}  \mu(E \cap B(x, C_1d(x)))   \, dx\\
&\le C \int_\O \frac{h(x) d(x)^{s-n}}{t}   \mu(E)^\theta \mu(B(x, C_1d(x)))^{(1-\theta)}  \, dx\\
&\le C \mu(E)^\frac{1}{q'} \int_\O \frac{h(x)}{t} d(x)^b \, dx
\end{align*}
where this time we have chosen $\theta=\frac{1}{q'}$, that clearly satisfies $0\le \theta\le 1$.

Finally, we arrive at
 $$
 \mu(E)^\frac{1}{q} t \le C \int_\O h(x) d(x)^b \, dx
 $$
 and this in turn implies, by Lemma \ref{weak-strong} with $d\nu = d(x)^\frac{b}{2} dx$, the strong inequality
 $$
\inf_{c\in\R} \|f-c\|_{L^q(\O, d^a)}\le C \int_\O \int_{|x-y|\le d(x)/2} \frac{|f(y)-f(x)|}{|y-x|^{n+s}} \, \delta(x,y)^b \, dx \, dy
 $$
 where we have used  that $\frac{d(x)}{2}\le d(y)\le \frac32 d(x)$ to replace each of these distances by $C \delta(x,y)$.
  \end{proof}

\section{The case of $\beta$-John domains}

In this section we obtain a representation analogous to that of Proposition \ref{representacion fraccionaria} in the case of $\beta$-John domains, for $\beta>1$. Observe that, although the estimate also holds for $\beta=1$, it is not only more complicated but also slightly worse than that of  Proposition \ref{representacion fraccionaria} in the case $p>1$, since it includes the restriction $b<sp-p+1-n+\frac{p-1}{\beta}+\frac{p(n+a)}{q\beta}$.
For this reason the weighted inequalities inherit this restriction, although we believe they should hold also in the case of equality. An example at the end of the Section shows that our results are sharp except at this endpoint.

To simplify calculations, throughout this section we assume, as we may by dilating $\Omega$, that  $d(0)=15$.

\begin{prop}
\label{representacion fraccionaria en beta John}
Given $s\in(0,1), a\ge 0$ and $f\in C^\infty(\O)$ we have
$$
|f(y) - \bar f|
\le C \int_{|x-y|< C_1 d(x)}\frac{h(x)}{|x-y|^{n-s}} \, dx
+ C \Big(\int_{|x-y|<C_2 d(x)^\frac{1}{\beta}} h(x)^p \, d(x)^{b-\frac{(n+a)p}{\beta q}} dx \Big)^\frac{1}{p}
$$
where $b<sp-p+1-n+\frac{p-1}{\beta}+\frac{p(n+a)}{q\beta}$ if $p>1$, and $b=s-n+\frac{(n+a)}{\beta q}$  if $p=1$, $\bar f$ is a constant and
$$
h(x)
:=\left(\int_{|x-w|\le d(x)/2} \frac{|f(w)-f(x)|^p}{|w-x|^{n+sp}} \, dw\right)^\frac{1}{p}
$$
for $y\in\O$, $h\equiv 0$ outside $\O$, and $C$, $C_1$ and $C_2$ are positive
constants depending only on $n$ and $\O$.
\end{prop}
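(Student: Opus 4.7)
The plan is to adapt the telescoping-convolution argument of Proposition~\ref{representacion fraccionaria} to the $\beta$-John setting by replacing the linear scale $t$ by $t^\beta$, since property~(2) of the definition only grants transverse room of order $t^\beta$ along $\gamma$. Picking $\varphi\in C^1_0(B(0,\lambda/2))$ with $\int\varphi=1$ and $u(x,\sigma)=(f\ast\varphi_\sigma)(x)$, I would set $\eta(t)=u(\gamma(t,y)+t^\beta z,\,t^\beta)$ and $\bar f=\int(f\ast\varphi)(z)\varphi(z)\,dz$; property~(2) ensures $\gamma(t,y)+t^\beta z\in\Omega$ for $|z|<\lambda/2$. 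Differentiating $\eta$ yields two structural summands as in the John case, with additional factors $\beta t^{\beta-1}$ coming from $\partial_t(t^\beta z)$ and $\partial_t(t^\beta)$ that remain uniformly bounded on $[0,1]$ because $\beta\ge 1$; averaging $\eta(0)-\eta(1)=-\int_0^1\eta'(t)\,dt$ against $\varphi(z)$ produces $f(y)-\bar f=-(I+II)$.

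Next, I would follow the John-case manipulations step by step: change variables $x=\gamma(t,y)+t^\beta z$ (Jacobian $t^{-n\beta}$), subtract $f(x)$ in the $w$-integral using $\int\nabla\varphi=0$ and $\int\psi=0$ for $\psi(u)=-n\varphi(u)-u\cdot\nabla\varphi(u)$, and apply H\"older's inequality in $w$ with the weight $|w-x|^{n+sp}$ to produce the factor $h(x)\cdot t^{\beta(s-1)}$. The support constraints from $\varphi$ and the curve properties (2)--(3) force, for fixed $x$, the admissible $t$-range $T(x,y)\subset [c|x-y|,\,c'd(x)^{1/\beta}]$, nonempty precisely when $|x-y|<C_2 d(x)^{1/\beta}$. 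This reduces the task to estimating an intermediate bound of the form $\int h(x)\int_{T(x,y)}t^{\beta(s-n-1)}\,dt\,dx$ over the outer region.

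The core new step is to split this $x$-integration into the \emph{John region} $R_1=\{|x-y|\le C_1 d(x)\}$ and the annular tail $R_2=\{C_1 d(x)<|x-y|<C_2 d(x)^{1/\beta}\}$. On $R_1$ one should recover the first term $\int_{R_1}h(x)/|x-y|^{n-s}\,dx$ by exploiting that the relevant part of the curve near $y$ still admits the John-type scale $t$ rather than $t^\beta$. On $R_2$ one applies H\"older's inequality in $x$, placing the weight $d(x)^{b-(n+a)p/(\beta q)}$ in the $L^p$-factor (reproducing the second summand) and bounding the complementary $L^{p'}$-factor $\int_{R_2}|x-y|^{A}d(x)^{B}\,dx$ uniformly in $y$ by means of the two-sided control $(|x-y|/C_2)^\beta\le d(x)\le |x-y|/C_1$ available on $R_2$; the threshold $b<sp-p+1-n+(p-1)/\beta+p(n+a)/(\beta q)$ is exactly the condition making the resulting exponent of $|x-y|$ greater than $-n$, and the specific term $(p-1)/\beta$ reflects that $d(x)\sim|x-y|^\beta$ near the outer edge of $R_2$. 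In the case $p=1$ H\"older is unavailable: the balanced choice $b=s-n+(n+a)/(\beta q)$ makes the pointwise integrand on $R_2$ match the second summand directly.

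The main obstacle I anticipate is the $R_1$ analysis. With the scale $t^\beta$ in the convolution, the raw inner $t$-integral produces a kernel $|x-y|^{\beta(s-n-1)+1}$, strictly more singular than the desired $|x-y|^{s-n}$ whenever $\beta>1$; recovering the John-type kernel therefore requires using that $\gamma(t,y)$ stays close to $y$ for small $t$, so that $d(\gamma(t,y))\sim d(y)$ and the convolution can be effectively run at the John scale $t$ in a neighborhood of $y$. A secondary delicacy is the sharp bookkeeping of exponents on $R_2$ needed to meet the threshold on $b$ stated in the proposition; both difficulties are genuinely present only in the strictly $\beta>1$ regime and do not appear in Proposition~\ref{representacion fraccionaria}.
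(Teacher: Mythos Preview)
Your overall strategy---telescoping along the $\beta$-John curves with a convolution scale adapted to the transverse room, and separating a John-type piece from a $\beta$-tail---is correct, and you have correctly located the main obstruction. But the proposal does not resolve it. Once you commit to the uniform scale $\sigma=t^\beta$ in $\eta(t)=u(\gamma(t,y)+t^\beta z,\,t^\beta)$, the small-$t$ part inevitably produces the kernel $|x-y|^{\beta(s-n-1)+1}$, and no after-the-fact observation that $d(\gamma(t,y))\sim d(y)$ for small $t$ repairs this: the mollification has already been run at the (too small) scale $t^\beta$, and you cannot ``effectively run it at the John scale $t$'' without changing the definition of $\eta$. So on $R_1$ the desired kernel $|x-y|^{s-n}$ is simply not available from your construction.

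The paper resolves this by building the scale transition into the mollifier from the outset, following \cite{DMRT}. One sets $\tau(y)=\inf\{t:\gamma(t,y)\notin B(y,d(y)/2)\}$ and defines a continuous, \emph{two-regime} scale
\[
\rho(t,y)=\begin{cases}\xi\,|y-\gamma(t,y)|,& t\le\tau(y),\\ \tfrac{1}{15}\,d(\gamma(t,y)),& t>\tau(y),\end{cases}
\]
and then $\eta(t)=u(\gamma(t,y)+\rho(t,y)z,\,\rho(t,y))$. The split is in $t$ at $\tau(y)$, not in $x$. For $t\le\tau(y)$ the scale is essentially linear in arclength, one has $|x-y|<C_1 d(x)$, and property~(4) yields the John kernel $|x-y|^{s-n}$ exactly as in Proposition~\ref{representacion fraccionaria}. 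For $t>\tau(y)$ one has $\rho\sim d(x)$ and $|x-y|<C_2 d(x)^{1/\beta}$; applying H\"older in $t$ (with property~(4) giving a factor $d(x)^{1/p}$) and then H\"older in $x$, the remaining $t$-integral is controlled via $\rho\gtrsim t^\beta$ and converges precisely when $b$ is below the stated threshold. In particular, the term $(p-1)/\beta$ in that threshold arises from this H\"older-in-$t$ step together with property~(4), not from the annular geometry of your region $R_2$.
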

\begin{proof}
It suffices to prove the result for $b$ close enough to the endpoint value. We consider, as before, $\varphi \in C_0^1(B(0, \lambda/2))$ such that $\int \varphi=1$, and set
$$
u(x,t)=(f* \varphi_t)(x)
$$
Then, following \cite{DMRT}, we define
$$
\tau(y)=\inf\{t : \gamma(t,y)\cap B(y, d(y)/2)=\emptyset \}
$$
and
$$
\rho(t,y)=\begin{cases}
\xi |y-\gamma(t,y)| &\mbox{ if } t\le \tau(y)\\
\frac{1}{15} d(\gamma(t,y)) &\mbox{ if } t> \tau(y)\\
\end{cases}
$$

where $\xi$ is chosen so that $\rho(\cdot,y)$ is a continuous function, that is,
$$
\xi=\frac{2}{15} \frac{d(\gamma(\tau(y),y))}{d(y)}.
$$

Notice that $\frac{1}{15}\le \xi\le \frac15$ since
$$
d(\gamma(\tau(y),y))\le |\gamma(\tau(y),y)-y|+d(y)=\frac{d(y)}{2}+d(y)=\frac32 d(y)
$$
and
$$
d(y)\le |y-\gamma(\tau(y),y)|+d(\gamma(\tau(y),y)) \Rightarrow d(y)\le 2d(\gamma(\tau(y),y)).
$$
Also, remark that $\rho(0,y)=0$ and $\rho(1,y)=1$ and that  $\gamma(t,y)+\rho(t,y)z \in \Omega$ for every $t\in [0,1]$ and $z\in B(0, \lambda/2)$ (see \cite{DMRT} for details).
Hence, if we define
$$
\eta(t)=u(\gamma(t,y)+\rho(t,y)z, \rho(t,y))
$$
we have that

\begin{align*}
f(y)-(f*\varphi)(z)&=u(y,0)-u(z,1) =\eta(0)-\eta(1) =-\int_0^1 \eta'(t) \, dt\\
&=-\int_0^1 \nabla u(\gamma(t,y)+\rho(t,y)z, \rho(t,y)) \cdot (\dot\gamma(t,y)+\dot\rho(t,y)z) \\
& \quad - \frac{\partial u}{\partial t}(\gamma(t,y)+\rho(t,y)z, \rho(t,y)) \cdot \dot\rho(t,y) \, dt
\end{align*}

Then, if $\bar f= \int (f*\varphi)(z) \varphi(z) \, dz$, we have

\begin{align*}
f(y)-\bar f &= -\int_{\R^n} \int_0^1 \nabla u(\gamma+\rho z, \rho)\cdot (\dot \gamma + \dot \rho z) \varphi(z) \, dt \, dz\\
&\quad-\int_{\R^n} \int_0^1 \frac{\partial u}{\partial t} (\gamma+\rho z, \rho) \dot\rho \varphi(z) \, dt \, dz\\
&=- (I + II)
\end{align*}

To estimate $I$, we make the change of variables
$$
\gamma(t,y)+\rho(t,y) z=x  \quad , \quad dz=\frac{dx}{\rho^n(t,y)}
$$
and use the definition of $u$ and the support of $\varphi$ to arrive at
\begin{align*}
I &= \int_{\R^n} \int_0^1 \int_{|x-w|<\lambda \rho/2} f(w) \frac{1}{\rho^{n+1}}
\nabla\varphi\Big(\frac{x-w}{\rho}\Big) \, dw \Big(\dot\gamma+
\Big(\frac{x-\gamma}{\rho}\Big)\dot\rho\Big) \varphi\Big(\frac{x-\gamma}{\rho}\Big) \frac{dt}{\rho^n} \, dx\\
\end{align*}

Now we use that $\int \frac{1}{\rho^{n+1}} \nabla\varphi \Big(\frac{x-w}{\rho}\Big) dw =0$
(to subtract $f(x)$ in the integral with respect to $w$),  that the integrand vanishes unless $|x-w|<\frac{\lambda}{2} \rho(t,y)$, that $|x-\gamma(t,y)|\le \frac{\lambda}{2}\rho(t,y)$ (both because of the support of $\varphi$) and that $\dot \rho(t,y)= \nabla d(\gamma(t,y)) \cdot \dot\gamma(t,y)$ (whence, $|\dot \rho|\le |\dot \gamma|$), to write

\begin{align*}
I &\le \int_{\R^n} \int_0^1
\int_{|x-w|<\lambda \rho/2} (f(w)-f(x)) \frac{1}{\rho^{n+1}} \nabla\varphi\Big(\frac{x-w}{\rho}\Big) \, dw \Big(\dot\gamma+ \Big(\frac{x-\gamma}{\rho}\Big) \dot\rho\Big) \varphi\Big(\frac{x-\gamma}{\rho}\Big) \frac{dt}{\rho^n} \, dx\\
 &\le C \int_{\R^n} \int_0^1 \int_{|x-w|<\lambda \rho/2} \frac{|f(w)-f(x)|}{|x-w|^{\frac{n}{p}+s}} \frac{1}{\rho^\frac{n}{p'}} \Big|\nabla\varphi\Big(\frac{x-w}{\rho}\Big)\Big| \, dw |\dot \gamma| \Big|\varphi \Big(\frac{x-\gamma}{\rho} \Big) \Big| \,  \frac{dt}{\rho^{n+1-s}} dx\\
\end{align*}

Now, we claim that $\lambda \rho(t,y)< d(x)$. Indeed, when $t\in [0, \tau(y)]$, we have that
$$
\rho(t,y)=\xi |y-\gamma(t,y)|\le \frac{\xi}{2} d(y)
$$
because $\gamma$ is inside $B(y, d(y)/2)$. But,
\begin{align}
\label{cota-cerca}
d(y)&\le |x-y|+d(x) \\
&\le |x-\gamma(t,y)|+|\gamma(t,y)-y|+d(x) \nonumber\\
&\le \frac{\lambda}{2}\rho(t,y)+\frac{1}{\xi} \rho(t,y)+d(x) \nonumber\\
&= \Big(\frac{\lambda}{2}+\frac{1}{\xi}\Big) \rho(t,y)+d(x).\nonumber
\end{align}
So,
$$
\lambda \rho(t,y)\le \frac{\lambda}{2-\xi\lambda}d(x) < d(x).
$$

On the other hand, if $t\in [\tau(y),1]$, we have that
$$
\rho(t,y) = \frac{1}{15} d(\gamma(t,y)) \le \frac{1}{15} |x-\gamma(t,y)| + \frac{1}{15} d(x)  \le \frac{\lambda}{30} \rho(t,y) + \frac{1}{15} d(x)
$$
so that
\begin{equation}
\label{cota-lejos}
\lambda \rho(t,y)\le \frac{2\lambda}{30-\lambda} d(x)  < d(x).
\end{equation}
Hence, using these bounds and H\"older's inequality (with the usual modification if $p=1$) we obtain
\begin{align*}
I &\le C \int_{\R^n} \int_0^1 \Big( \int_{|x-w|<d(x)/2} \frac{|f(w)-f(x)|^p}{|x-w|^{n+sp}} dw \Big)^\frac{1}{p} \\
&\quad \cdot \Big(\int_{\R^n} \Big|\nabla \varphi \Big(\frac{x-w}{\rho}\Big) \Big|^{p'} \frac{1}{\rho^n} dw \Big)^\frac{1}{p'} |\dot \gamma| \Big|\varphi\Big( \frac{x-\gamma}{\rho}\Big)\Big| \frac{dt}{\rho^{n+1-s}} dx\\
&\le C\int_{\R^n} \Big( \int_{|x-w|<d(x)/2} \frac{|f(w)-f(x)|^p}{|x-w|^{n+sp}} dw \Big)^\frac{1}{p} \int_{0}^1  |\dot \gamma| \Big|\varphi\Big( \frac{x-\gamma}{\rho}\Big)\Big| \frac{dt}{\rho^{n+1-s}} dx\\
&= C\int_{\R^n} h(x) \int_{0}^1  |\dot \gamma| \Big|\varphi\Big( \frac{x-\gamma}{\rho}\Big)\Big| \frac{dt}{\rho^{n+1-s}} dx\\
&= C \Big( \int_{\R^n} h(x) \int_{0}^{\tau(y)} \dots dt dx  +  \int_{\R^n} h(x) \int_{\tau(y)}^1 \dots \, dt dx \Big)\\
&= C (I_a + I_b)
\end{align*}

For $I_a$, notice that proceeding as in \eqref{cota-cerca} we have that
$$
|x-y|\le \Big( \frac{\lambda}{2} +\frac{1}{\xi}\Big) \rho(t,y) <  \Big( \frac12 +\frac{1}{\lambda \xi}\Big) d(x)<C_1 d(x).
$$

Thus, we can write

\begin{align*}
I_a & \le C \int_{|x-y|< C_1 d(x)} h(x) \int_{0}^{\tau(y)}  |\dot \gamma| \Big|\varphi\Big( \frac{x-\gamma}{\rho}\Big)\Big| \frac{dt}{|x-y|^{n+1-s}} dx\\
\end{align*}

Now, the integral vanishes unless
 $$
 |x-\gamma(t,y)|\le \rho(t,y) \frac{\lambda}2 \le \rho(t,y)
 =\xi|y-\gamma(t,y)|
 \le\xi|x-\gamma(t,y)|+\xi|x-y|
 $$
 which implies
 $$
 |x-\gamma(t,y)|\le \frac{\xi}{1-\xi} |x-y|\le \frac14 |x-y|,
 $$
so we can bound
 \begin{align*}
I_a &\le C \int_{|x-y|< C_1 d(x)} h(x) \int_0^{\tau(y)}   \chi_{ |x-\gamma(t,y)|\le  \frac14 |x-y|} |\dot\gamma(t,y)|  \frac{dt}{|x-y|^{n+1-s}} \, dx\\
&\le C \int_{|x-y|< C_1 d(x)}  h(x) \; \ell(\gamma(y) \cap B(x, |x-y|/4))   \frac{1}{|x-y|^{n+1-s}} \, dx\\
&\le C \int_{|x-y|< C_1 d(x)}     \frac{h(x)}{|x-y|^{n-s}} \, dx
 \end{align*}
where in the last step we have used property \eqref{prop5} of $\beta$-John domains.

To bound $I_b$, observe that for $t\in [\tau(y), 1]$ we have
\begin{equation}
d(x)\le d(\gamma(t,y)) + |x-\gamma(t,y)|\le 15 \rho(t,y) + \frac{\lambda}{2} \rho(t,y),
\label{cota-d-rho}
\end{equation}
and, because we are in a $\beta$-John domain, and by \eqref{cota-lejos} we have that
\begin{equation}
\label{cota en beta John}
|x-y|\le |x-\gamma(t,y)|+|\gamma(t,y)-y|<\frac{\lambda}{2}\rho(t,y)
+ C|\dot\gamma(t,y)| t< C_2d(x)^\frac{1}{\beta}
\end{equation}
so that, if $p=1$,
\begin{align*}
I_b&= \int_{\R^n} h(x) \int_{\tau(y)}^1  |\dot \gamma| \Big|\varphi\Big( \frac{x-\gamma}{\rho}\Big)\Big|
\frac{dt}{\rho^{n+1-s}} dx\\
&\le C \int_{\R^n} h(x) \int_{\tau(y)}^1 |\dot \gamma| \Big| \varphi\Big(\frac{x-\gamma}{\rho}\Big)\Big|
\frac{dt}{d(x)^{n+1-s}} dx\\
&\le C \int_{|x-y|<C_2 d(x)^\frac{1}{\beta}} h(x) \int_{\tau(y)}^1
\chi_{ |x-\gamma(t,y)|\le  \frac{d(x)}{2}}|\dot \gamma| \frac{dt}{d(x)^{n+1-s}} dx\\
&\le C \int_{|x-y|<C_2 d(x)^\frac{1}{\beta}} h(x)  \; \ell(\gamma(y) \cap B(x,d(x)/2)){d(x)^{s-n-1}} dx\\
&\le C\int_{|x-y|<C_2 d(x)^\frac{1}{\beta}} h(x) \, d(x)^{s-n} dx\\
\end{align*}
where we have used \eqref{cota-lejos} and property \eqref{prop5}.

If  $p>1$, to bound $I_b$, by H\"older's inequality and property \eqref{prop5} we have
\begin{align*}
I_b&= \int_{\R^n} h(x) \int_{\tau(y)}^1  |\dot \gamma|
\Big|\varphi\Big( \frac{x-\gamma}{\rho}\Big)\Big| \frac{dt}{\rho^{n+1-s}} dx\\
&\le C  \int_{|x-y|<C_2 d(x)^\frac{1}{\beta}}  h(x) \Big(  \int_{\tau(y)}^1 \chi_{|x-\gamma|<\frac{d(x)}{2}} |\dot \gamma| \, dt \Big)^\frac{1}{p}
 \Big(\int_{\tau(y)}^1  \Big| \varphi\Big(\frac{x-\gamma}{\rho}\Big)\Big|^{p'} \frac{1}{\rho^{(n+1-s) p'}}  |\dot \gamma| \, dt  \Big)^\frac{1}{p'} dx\\
 &\le C  \int_{|x-y|<C_2 d(x)^\frac{1}{\beta}}  h(x) d(x)^\frac{1}{p}
 \Big(\int_{\tau(y)}^1  \Big| \varphi\Big(\frac{x-\gamma}{\rho}\Big)\Big|^{p'} \frac{1}{\rho^{(n+1-s) p'}}  |\dot \gamma| \, dt  \Big)^\frac{1}{p'} dx\\
 \end{align*}
 Therefore, since $\rho\sim d(x)$ by \eqref{cota-lejos} and \eqref{cota-d-rho}, using H\"older's inequality again we arrive at
 \begin{align*}{}
 I_b &
 \le C  \int_{|x-y|<C_2 d(x)^\frac{1}{\beta}}  h(x) d(x)^{(b-\frac{(n+a)p}{\beta q})\frac{1}{p}}
 \Big(\int_{\tau(y)}^1  \Big| \varphi\Big(\frac{x-\gamma}{\rho}\Big)\Big|^{p'} \frac{1}{\rho^{(n+1-s) p'
 + (b-\frac{(n+a)p}{\beta q}-1)\frac{p'}{p} }} \, dt  \Big)^\frac{1}{p'} dx\\
&\le C  \Big( \int_{|x-y|<C_2 d(x)^\frac{1}{\beta}}
h(x)^p d(x)^{b-\frac{(n+a)p}{\beta q}} \, dx \Big)^\frac{1}{p}  \\
 &\quad\cdot \Big(\int_{\tau(y)}^1 \int_{\R^n}
 \Big| \varphi\Big(\frac{x-\gamma}{\rho}\Big)\Big|^{p'} \frac{1}{\rho^n} \, dx \,
 \frac{1}{\rho^{-n+(n+1-s) p'+ (b-\frac{(n+a)p}{\beta q}-1)\frac{p'}{p} }} \, dt  \Big)^\frac{1}{p'}\\
\end{align*}
and finally, by property \eqref{prop2},
\begin{align*}
 I_b  &\le C  \Big( \int_{|x-y|<C_2 d(x)^\frac{1}{\beta}}  h(x)^p d(x)^{b-\frac{(n+a)p}{\beta q}} \, dx \Big)^\frac{1}{p}
 \Big(\int_{0}^1  \frac{1}{t^{\beta[-n+(n+1-s) p'+ (b-\frac{(n+a)p}{\beta q}-1)\frac{p'}{p} ]}} \, dt  \Big)^\frac{1}{p'}\\
 &\le C  \Big( \int_{|x-y|<C_2 d(x)^\frac{1}{\beta}}  h(x)^p d(x)^{b-\frac{(n+a)p}{\beta q}} \, dx \Big)^\frac{1}{p}
\end{align*}
provided $0\le\beta[-n+(n+1-s) p'+ (b-\frac{(n+a)p}{\beta q}-1)\frac{p'}{p}]<1$ which holds for
$b<ps-p+1-n+\frac{p-1}{\beta}+\frac{(n+a)p}{\beta q}$ and sufficiently close to that number.

To estimate $II$ we proceed in a similar way. Indeed, since $\int\varphi_t(x)dx=1$ for all
$t$, we have $\int\frac{\partial\varphi_t}{\partial t}(x)dx=0$. Moreover, recalling that
$$
\frac{\partial\varphi_t}{\partial t}(x)
=\frac1{t^{n+1}}\psi\Big(\frac{x}{t}\Big)
$$
with $\psi :=-n\varphi-x\cdot\nabla\varphi$. Therefore, repeating the arguments
that we used to bound $I$ we obtain,

\begin{align*}
II &= \int_{\R^n} \int_0^1
\int_{|x-w|<\lambda \rho/2} (f(w)-f(x)) \frac{1}{\rho^{n+1}} \psi\Big(\frac{x-w}{\rho}\Big) \, dw
\dot\rho \varphi\Big(\frac{x-\gamma}{\rho}\Big)
\frac{dt}{\rho^n} \, dx
\end{align*}
which can be bounded analogously. This completes the proof.
\end{proof}

Using the above representation we obtain the improved inequalities in the case of $\beta$-John domains:

\begin{theorem}
\label{teorema en beta John}
Let $\Omega\subset\R^n$ be a $\beta$-John domain,  $1< p\le q<\infty$, $a\ge 0$,
 $b<\frac{(n+a)p}{q\beta}+\frac{p-1}{\beta}+sp-p+1-n$ and, additionally,
 $q\le \frac{n-p}{n-sp}$ when $p<\frac{n}{s}$. Given $s\in(0,1)$ and $f\in C^\infty(\O)$ we have
$$
\inf_{c\in\R}\|f(y) - c\|_{L^q(\O, d^a)}
\le C \left\{
\int_\O\int_{|x-z|\le d(x)/2}\frac{|f(z)-f(x)|^p}{|z-x|^{n+sp}} \, \delta(x,z)^{b} dz dx
\right\}^{1/p}
$$
where $\delta(x,z):=\min\{d(x),d(z)\}$.

\end{theorem}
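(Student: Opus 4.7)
The plan is to mirror the duality-based proof of Theorem \ref{teorema en John}, but now using the two-term representation of Proposition \ref{representacion fraccionaria en beta John}. Writing $|f(y) - \bar f| \le C(U(y) + S(y))$ with
$$
U(y) := \int_{|x-y|<C_1 d(x)} \frac{h(x)}{|x-y|^{n-s}}\, dx, \qquad S(y) := \left(\int_{|x-y|<C_2 d(x)^{1/\beta}} h(x)^p\, d(x)^{b - \frac{(n+a)p}{\beta q}}\, dx\right)^{1/p},
$$
it suffices to bound $\|U\|_{L^q(\O, d^a)}$ and $\|S\|_{L^q(\O, d^a)}$ by the right-hand side of the claimed inequality.

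The $U$-term is handled exactly as in Theorem \ref{teorema en John}. A short algebraic check shows that every $b$ admissible under the present hypotheses is also admissible in Theorem \ref{teorema en John}: the difference between the two thresholds equals $(1 - 1/\beta)\big(\frac{(n+a)p}{q} + p - 1\big) \ge 0$. Consequently the same duality argument carries over, split into the three sub-cases $p = q$ (via Lemma \ref{maximal} and the Hardy--Littlewood maximal operator), $p < q$ with $\frac{(n+a)p}{n+b-sp} \le \frac{np}{n-sp}$ (via the fractional integral $I_{s-\eta}$ with $\eta = \frac{b}{p} - \frac{a}{q}$), and $p < q$ with $\frac{(n+a)p}{n+b-sp} > \frac{np}{n-sp}$ (via $I_s$ together with $b/p \le a/q$). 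In each case, the relation $\delta(x,z) \sim d(x)$ in the region $|x-z| \le d(x)/2$ converts the resulting $\|d^{b/p} h\|_{L^p(\O)}$ into the required right-hand side.

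For the $S$-term, since $q/p \ge 1$, Minkowski's integral inequality gives
$$
\|S\|_{L^q(\O, d^a)}^p \le \int_\O h(x)^p\, d(x)^{b - \frac{(n+a)p}{\beta q}} \left(\int_{B(x, C_2 d(x)^{1/\beta})} d(y)^a\, dy\right)^{p/q} dx.
$$
Since $\O$ is bounded and $\beta \ge 1$, the inequality $d(y) \le d(x) + |x-y| \le C d(x)^{1/\beta}$ (valid in both regimes $d(x) \le 1$ and $d(x) \ge 1$) yields $\int_{B(x, C_2 d(x)^{1/\beta})} d(y)^a\, dy \le C d(x)^{(n+a)/\beta}$. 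Inserting this, the weights cancel exactly to leave $C \int_\O h(x)^p d(x)^b\, dx$, which is in turn dominated by the claimed right-hand side via the same $\delta(x,z) \sim d(x)$ argument.

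The main obstacle lies in the bookkeeping for the $U$-term: confirming that admissibility of $b$ under the present stronger hypotheses truly implies its admissibility in each of the three sub-cases of the Theorem \ref{teorema en John} argument, and verifying that no new constraint on $b$ enters beyond the one already built into Proposition \ref{representacion fraccionaria en beta John}. The $S$-term estimate is, by contrast, a direct consequence of Minkowski and the boundedness of $\O$, and imposes no further restriction on $b$.
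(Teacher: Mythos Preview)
Your proposal is correct and follows essentially the same route as the paper: split via Proposition \ref{representacion fraccionaria en beta John} into the two terms, invoke the proof of Theorem \ref{teorema en John} verbatim for the first, and use Minkowski together with $d(y)\le C d(x)^{1/\beta}$ for the second. Your explicit algebraic check that the $\beta$-John constraint on $b$ is stricter than the John one (so the Theorem \ref{teorema en John} argument applies to the $U$-term without further hypothesis) is a detail the paper leaves implicit, and your remark that boundedness of $\O$ is what makes $d(x)\le C d(x)^{1/\beta}$ work in the regime $d(x)\ge 1$ is also a useful clarification.
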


\begin{proof}

By Proposition \ref{representacion fraccionaria en beta John} we have
\begin{align*}
|f(y) - \bar f| &
\le C \int_{|x-y|< C_1 d(x)}\frac{h(x)}{|x-y|^{n-s}} \, dx
+ C \Big(\int_{|x-y|<C_2 d(x)^\frac{1}{\beta}} h(x)^p \, d(x)^{b-\frac{(n+a)p}{\beta q}} dx \Big)^\frac{1}{p} \\
&= A +B
\end{align*}

Observe that $\|A\|_{L^q(\Omega, d^a)}$ can be bounded as in Theorem  \ref{teorema en John},
 so it suffices to restrict ourselves to $\|B\|_{L^q(\Omega, d^a)}^p=\|B^p\|_{L^\frac{q}{p}(\Omega, d^a)}$. We have, using Minkowski's integral inequality,

 \begin{align*}
 \|B^p\|_{L^\frac{q}{p}(\Omega, d^a)} & = C \left[ \int_\Omega \Big( \int_{|x-y|<C_2 d(x)^\frac{1}{\beta}} h(x)^p d(x)^{b-\frac{(n+a)p}{\beta q}} dx  \Big)^\frac{q}{p} d(y)^a \, dy \right]^\frac{p}{q}\\
 & \le C  \int_\Omega h(x)^p d(x)^{b-\frac{(n+a)p}{\beta q}}  \Big( \int_{|x-y|<C_2 d(x)^\frac{1}{\beta}} d(y)^a \, dy  \Big)^\frac{p}{q}  \, dx \\
 & \le C  \int_\Omega h(x)^p d(x)^{b-\frac{(n+a)p}{\beta q}}  \Big( d(x)^\frac{a}{\beta} \int_{|x-y|<C_2 d(x)^\frac{1}{\beta}}  \, dy  \Big)^\frac{p}{q}  \, dx \\
 &= C  \int_\Omega h(x)^p d(x)^b  \, dx
 \end{align*}
where again we have used that $d(y)\le |x-y| +d(x)\le Cd(x)^\frac{1}{\beta}$
and that $a\ge 0$. Therefore,
 $$
 \|B\|_{L^q(\Omega, d^a)} \le C \Big( \int_\Omega h(x)^p d(x)^b  \, dx \Big)^\frac{1}{p}.
 $$
 This concludes the proof.
\end{proof}

As discussed before, in the case $p=1$ we recover the endpoint value for $b$ and can prove the following:

\begin{theorem}
\label{teo 1 en beta John}
Let $\Omega\subset\R^n$ be a $\beta$-John domain,  $1\le q\le \frac{n-1}{n-s}$, $a\ge 0$ and
 $b\le\frac{(n+a)}{q\beta}+s-n$. Given $s\in(0,1)$ and $f\in C^\infty(\O)$ we have
$$
\inf_{c\in\R}\|f(y) - c\|_{L^q(\O, d^a)}
\le C
\int_\O\int_{|x-z|\le d(x)/2}\frac{|f(z)-f(x)|}{|z-x|^{n+s}} \, \delta(x,z)^{b} dz dx
$$
where $\delta(x,z):=\min\{d(x),d(z)\}$.
\end{theorem}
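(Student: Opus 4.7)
The plan is to invoke Proposition \ref{representacion fraccionaria en beta John} in the $p=1$ case, which at the endpoint value $b^\ast := s - n + (n+a)/(\beta q)$ yields the pointwise bound $|f(y)-\bar f| \le C(A(y)+B(y))$, where
\begin{align*}
A(y) &= \int_{|x-y|<C_1 d(x)} \frac{h(x)}{|x-y|^{n-s}} \, dx, \\
B(y) &= \int_{|x-y|<C_2 d(x)^{1/\beta}} h(x) \, d(x)^{s-n}\, dx.
\end{align*}
It suffices to treat $b = b^\ast$: smaller $b$ follows because $\delta(x,z) \le \mathrm{diam}(\Omega)$, so $\delta(x,z)^{b^\ast} \le C\,\delta(x,z)^b$. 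At the end of the argument I convert $\int_\Omega h(x) d(x)^{b^\ast} dx$ into the double integral on the right-hand side of the theorem by expanding $h$ and using $d(x)/2 \le d(w) \le 3d(x)/2$ on the support of $h$ to replace $d(x)^{b^\ast}$ by $\delta(x,w)^{b^\ast}$.

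The case $q = 1$ is handled directly: integrating the pointwise bound against $d(y)^a$ and applying Fubini turns both $\|A\|_{L^1(\Omega,d^a)}$ and $\|B\|_{L^1(\Omega,d^a)}$ into integrals of the form $\int_\Omega h(x) d(x)^r dx$ with $r \ge b^\ast$ ($r = a+s$ for $A$, using $d(y)\le (C_1+1)d(x)$; $r = b^\ast$ for $B$, using $d(y) \le C d(x)^{1/\beta}$), and the elementary comparison below finishes the argument.

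For $q > 1$ I mimic the weak-implies-strong scheme of Theorem \ref{caso 1 en John}. The $B$-term gives a strong bound directly via Minkowski's integral inequality: since $d(y) \le C d(x)^{1/\beta}$ on the region of integration,
\begin{align*}
\|B\|_{L^q(\Omega, d^a)}
&\le \int_\Omega h(x) d(x)^{s-n} \Bigl(\int_{|x-y|<C_2 d(x)^{1/\beta}} d(y)^a \, dy\Bigr)^{1/q} dx \\
&\le C \int_\Omega h(x) d(x)^{b^\ast}\, dx.
\end{align*}
For the $A$-term I reproduce almost verbatim the argument of Theorem \ref{caso 1 en John}: with $E = \{A > t\}$ and $\mu = d(y)^a dy$, I split the inner region $|x-y| < C_1 d(x)$ into the piece $|x-y| < d(x)/2$ and its complement, use $\int_E |x-y|^{s-n} dy \le C |E|^{s/n}$ on the first piece, and interpolate $\mu(E \cap B(x, C_1 d(x)))$ between $\mu(E)$ and $\mu(B(x, C_1 d(x))) \le C d(x)^{n+a}$ with exponent $\theta = n/(sq')$. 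The hypothesis $q \le (n-1)/(n-s) \le n/(n-s)$ guarantees $\theta \in [0,1]$, producing the weak-type bound $\mu(\{A>t\})^{1/q}\, t \le C \int_\Omega h(x) d(x)^{(n+a)/q - n + s}\, dx$.

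The main obstacle is reconciling the exponent produced by the $A$-estimate with the target exponent: one naturally obtains the \emph{John} exponent $(n+a)/q - n + s$, which in general exceeds $b^\ast$. The simple device $d(x)^{(n+a)/q - n + s} \le C\, d(x)^{b^\ast}$, valid because $\beta \ge 1$ and $d$ is bounded above on $\Omega$, closes the gap. Adding the weak bound for $A$ to the strong bound for $B$ gives a weak-type inequality for $|f - \bar f|$, and Lemma \ref{weak-strong}, applied with $d\mu = d(y)^a dy$ and $d\nu = d(x)^{b^\ast/2} dx$, upgrades it to the strong inequality stated in the theorem.
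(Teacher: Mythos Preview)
Your proof is correct and follows essentially the same approach as the paper. The paper is terser---it simply asserts that $\|A\|_{L^{q,\infty}(\Omega,d^a)}$ ``can be bounded as in Theorem~\ref{caso 1 en John}'' and then invokes the weak-implies-strong lemma---whereas you spell out explicitly the exponent comparison $d(x)^{(n+a)/q - n + s} \le C\, d(x)^{b^\ast}$ (valid because $\beta \ge 1$) needed to reconcile the John-type bound on $A$ with the $\beta$-John target exponent; this step is implicit in the paper's proof but is indeed required.
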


\begin{proof}

Clearly, it suffices to prove the result for  $b=\frac{n+a}{q\beta}+s-n$. By Proposition \ref{representacion fraccionaria en beta John}, we have
\begin{align*}
|f(y) - \bar f| &
\le C \int_{|x-y|< C_1 d(x)}\frac{h(x)}{|x-y|^{n-s}} \, dx
+ C \int_{|x-y|<C_2 d(x)^\frac{1}{\beta}} h(x) \, d(x)^{s-n} dx  \\
&= A +B
\end{align*}

Observe that $\|A\|_{L^1(\Omega, d^a)}$ and $\|A\|_{L^{q,\infty}(\Omega, d^a)}$ (for $q>1$) can be bounded as in Theorem  \ref{caso 1 en John}.
So we must consider $\|B\|_{L^q(\Omega, d^a)}$ ($q\ge 1$). By Minkowski's integral inequality, we have
\begin{align*}
\|B\|_{L^q(\Omega, d^a)} &\le C \int_\Omega h(x) \, d(x)^{s-n} \Big( \int_{|x-y|<C_2 d(x)^\frac{1}{\beta}}   d(y)^a \, dy \Big)^\frac{1}{q} \, dx\\
&\le C \int_\Omega h(x) \, d(x)^{s-n} \Big( d(x)^\frac{a}{\beta} \int_{|x-y|<C_2 d(x)^\frac{1}{\beta}}   \, dy  \Big)^\frac{1}{q} \, dx\\
&= C \int_\Omega h(x) \, d(x)^{s-n+\frac{a+n}{q\beta}} \, dx
\end{align*}
 where in the second line we have used that $d(y)\le |x-y| +d(x)\le Cd(x)^\frac{1}{\beta}$ and
 that $a\ge 0$.

The result then follows immediately for $q=1$ and using the ``weak implies strong" technique as in  Theorem  \ref{caso 1 en John} for $q>1$.
\end{proof}

To analyze the optimality of our estimates (in terms of the upper bound on $q$) we consider the following
``rooms and corridors" domain introduced in  \cite{HK}  (see the discussion after Corollary 5).Therefore, we will be somewhat sketchy.

\begin{theorem}
\label{mushroom}
Let $s\in(0,1), a\ge 0$ and $1\le p\le q<\infty$. There exist a $\beta$-John domain $\O\subset\R^n$
and $f\in C^\infty(\O)$ such that
$$
\inf_{c\in\R}\|f(y) - c\|_{L^q(\O, d^a)}
\le C \left\{
\int_\O\int_{|x-z|\le d(x)/2}\frac{|f(z)-f(x)|^p}{|z-x|^{n+sp}} \, \delta(x,z)^{b} dz dx
\right\}^{1/p}
$$
cannot hold unless $q\le\frac{(n+a)p}{1-p+\beta(b+n-1+p-sp)}$.
\end{theorem}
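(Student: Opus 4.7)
The plan is to use the classical ``rooms and corridors'' $\beta$-John construction from \cite{HK}. Build $\Omega$ by attaching, to a fixed Lipschitz main body, a disjoint countable family of mushrooms $M_k$: a spherical room $B_k$ of radius $r_k=2^{-k}$ connected to the main body by a cylindrical neck $N_k$ of length of order $r_k$ and cross-sectional radius $w_k=r_k^\beta$. The scaling $w_k=r_k^\beta$ is exactly what makes $\Omega$ a $\beta$-John domain, and no better: along a John curve from $0$ to the head $B_k$, the critical point is the entrance of $N_k$, where the arc-length parameter $t$ is of order $r_k$ and $d(\gamma(t))\sim w_k=r_k^\beta$, saturating property (2) of the definition.

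As test functions, for each large $k$ take $f_k\in C^\infty(\Omega)$ equal to $1$ on $B_k$, vanishing outside $M_k$, and transitioning smoothly across the neck $N_k$ with $|\nabla f_k|\le C/r_k$. Since $\mu_a(B_k)\sim r_k^{n+a}$ (because $d\sim r_k$ on $B_k$) while the $d^a$-measure of $\Omega\setminus M_k$ is bounded below by a positive constant independent of $k$, the standard two-value argument (the minimizing $c$ cannot be close to both $0$ and $1$) gives the lower bound
$$
\inf_{c\in\R}\|f_k-c\|_{L^q(\Omega,d^a)}\ge c_0\,r_k^{(n+a)/q}.
$$

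For the right-hand side, the integrand is supported in $x\in N_k$, since $f_k$ is locally constant elsewhere. On $N_k$ we have $d(x)\sim r_k^\beta$, so $\delta(x,z)\sim r_k^\beta$ on the admissible region $|x-z|\le d(x)/2$, and the mean value theorem gives $|f_k(z)-f_k(x)|\le C|z-x|/r_k$. Integrating $|z-x|^{p-n-sp}$ on a ball of radius $r_k^\beta/2$ (which converges since $s<1$) and then integrating over $x\in N_k$, whose volume is of order $r_k^{\beta(n-1)+1}$, produces
$$
\text{RHS}\le C\,r_k^{\{\beta[p(1-s)+b+n-1]+1-p\}/p}.
$$

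Comparing the two bounds and letting $k\to\infty$, the inequality can hold uniformly only if $(n+a)/q\ge\{\beta[p(1-s)+b+n-1]+1-p\}/p$, which rearranges to the stated constraint $q\le (n+a)p/[1-p+\beta(b+n-1+p-sp)]$. The only genuine subtlety is calibrating the mushroom parameters so that $\Omega$ is truly a $\beta$-John domain with the given $\beta$ (so that property (2) of the definition is saturated by the neck scales and not improved by a smaller exponent); once that calibration is fixed, everything else reduces to the two elementary scaling computations displayed above.
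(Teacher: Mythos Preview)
Your construction is exactly the paper's: the rooms-and-corridors $\beta$-John domain from \cite{HK} with mushrooms of cap radius $r_k$ and neck radius $r_k^\beta$, a test function constant on the cap and on the main body and transitioning with slope $\sim 1/r_k$ along the neck, and a comparison of the resulting powers of $r_k$ on the two sides. The paper carries out the computation only for $a=b=0$ and then remarks that the general case is analogous, whereas you keep the weights throughout; this is purely cosmetic.

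One step in your sketch is looser than in the paper and needs a word of justification. The claim that the right-hand integrand is supported on $\{x\in N_k\}$ is not literally true: for $x$ in the cap or in the main body within distance $\sim r_k^\beta$ of a neck end, the ball $B(x,d(x)/2)$ can reach into the transition region. The paper handles this by splitting the outer integral into $x\in Q$, $x\in\mathcal P_i$, $x\in\mathcal C_i$ and checking that the two end contributions carry the exponent $-p+\beta(n+p-sp)$, which dominates $1-p+\beta(n-1+p-sp)$ since $\beta\ge 1$ and is therefore harmless. You can equally well make your claim literally correct by placing the smooth transition strictly in the interior of the neck, with constant buffers of length $\gtrsim r_k^\beta$ at each end; then for $x\notin N_k$ near a junction one has $d(x)\lesssim r_k^\beta$ and $B(x,d(x)/2)$ stays inside the buffer where $f_k$ is constant. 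A similar remark applies to the assertion $d(x)\sim r_k^\beta$ on $N_k$, which fails near the cylindrical wall: what is true is $\delta(x,z)\sim d(x)\le r_k^\beta$ on the admissible region, and for $b<0$ one must actually integrate $d(x)^{b+p(1-s)}$ over the neck rather than replace it by $r_k^{\beta(b+p(1-s))}$. The outcome is still $\sim r_k^{1+\beta(b+n-1+p-sp)}$ provided $b+p(1-s)>-1$, which is implicit in the denominator of the claimed bound on $q$ being positive.
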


\begin{proof}
Assume, for simplicity, that $a=b=0$.

Following \cite{HK}, we define a `mushroom' $F$ of size $r$ as the union of a cylinder of height
$r$ and radius $r^\beta$ (called the `stem' and denoted by $\mathcal{P}$) with a ball of radius $r$
(called the `cap' and denoted by $\mathcal{C}$), so that they create a mushroom-like shape.
The domain $\O$ considered consists of a cube $Q$ and an infinite sequence of disjoint mushrooms
$F_1, F_2, \dots$   on one side of the cube (called the `top'). The stems of $F_1, F_2, \dots$
are perpendicular to the top and of decreasing size $r_i\to 0$.
This domain $\O$ can easily be seen to be a $\beta$-John domain.

Now, we let $f_i$ be the piecewise linear function on $\O$ such that $f_i=0$ outside
$F_i$, $f_i=1$ on the cap and $f_i$ is linear on the stem. We may also assume that
$\bar f_i=0$ for every $i$. Hence, $\|f_i-\bar f_i\|_{L^q(\O)} \ge c_n r_i^\frac{n}{q}$.

To bound $(\int_\O \int_{|x-z|<d(x)/2} \frac{|f_i(x)-f_i(z)|^p}{|x-z|^{n+sp}} dz dx)^{1/p}$ observe that:
\begin{itemize}
\item if $x \in \mathcal{C}_i, z\in Q$, then the integral vanishes, since  $d(x)\le r_i$ and $|x-z|>r_i$
\item if $x, z\in Q$ or $x, z\in \mathcal{C}_i$, then $|f_i(x)-f_i(z)|=0$
\item in all remaining cases, $|f_i(x)-f_i(z)|^p\sim r_i^{-p} |x_n-z_n|^p$
and $\frac12 d(x) \le d(z) \le \frac32 d(x)$, so  $d(x) \sim d(z) \le  r_i^\beta$
\end{itemize}
Then,
\begin{align*}
\int_\O \int_{|x-z|<d(x)/2} \frac{|f_i(x)-f_i(z)|^p}{|x-z|^{n+sp}} dz dx &=
\int_{Q}  \cdots \, dx + \int_{\mathcal{P}_i}    \cdots \,dx+  \int_{\mathcal{C}_i}  \cdots \,dx\\
&= I_1 + I_2 + I_3
\end{align*}
We have
\begin{align*}
I_1 &\le \int_{Q} \int_{|x-z|<d(x)/2} \frac{1}{r_i^p} \frac{|x_n -z_n|^p}{|x-z|^{n+sp}} \, dz \, dx \\
&\le C \int_{\cup_{w\in \mathcal{P}_i} B(w, r_i^\beta) \cap Q} \frac{1}{r_i^p} d(x)^{-sp+p} \, dx \\
&\le C |\cup_{w\in \mathcal{P}_i} B(w, r_i^\beta) \cap Q| \, r_i^{-p+\beta(-sp+p)}\\
&\le C  r_i^{-p+\beta(n-sp+p)}\\
\end{align*}

\begin{align*}
I_2 &\le \int_{\mathcal{P}_i} \int_{|x-z|<d(x)/2} \frac{1}{r_i^p} \frac{|x_n -z_n|^p}{|x-z|^{n+sp}} \, dz \, dx \\
&\le C \int_{\mathcal{P}_i} \frac{1}{r_i^p} d(x)^{-sp+p} \, dx \\
&\le C |\mathcal{P}_i| r_i^{-p+\beta(-sp+p)}\\
&= C r_i^{1-p+\beta(-sp+p+n-1)}
\end{align*}

\begin{align*}
I_3 &\le \int_{\mathcal{C}_i} \int_{|x-z|<d(x)/2} \frac{1}{r_i^p} \frac{|x_n -z_n|^p}{|x-z|^{n+sp}} \, dz \, dx \\
&\le C \int_{\cup_{w\in \mathcal{P}_i} B(w, r_i^\beta) \cap \mathcal{C}_i} \frac{1}{r_i^p} d(x)^{-sp+p} \, dx \\
&\le C |\cup_{w\in \mathcal{P}_i} B(w, r_i^\beta) \cap \mathcal{C}_i| r_i^{-p+\beta(-sp+p)}\\
&\le C  r_i^{-p+\beta(n-sp+p)}\\
\end{align*}

Then, there must hold $r_i^\frac{n}{q}\le C (r_i^{1-p+\beta(-sp+p+n-1)})^\frac{1}{p}$ which,
for sufficiently small $r_i$, can only hold if $q\le \frac{np}{1-p+\beta(n-1+p-sp)}$, as we wanted to prove.

It is easy to see that the same example can be used to prove the optimality in the general case (with $a, b$ not necessarily $0$).
\end{proof}

\section{The case of H\"older-$\alpha$ domains}

Roughly speaking, a H\"older-$\alpha$ domain is given locally by the hypograph, in an appropriate
orthogonal system, of
a H\"older-$\alpha$ function (a typical example being a cuspidal domain).
For a precise definition we refer to \cite[Section 5.2]{DMRT}.

These domains are a particular case of $\beta$-John domains with $\beta=1/\alpha$. However,
they are known to have better embedding properties, as they cannot contains ``rooms and corridors" like
 general $\beta$-John domains (see, e.g. \cite{BS} and \cite[Example 2.4]{KM}).
 Therefore, it is natural that our result of Theorem \ref{teorema en beta John} can be improved in this case.

We obtain the following result, which is an improvement of Theorem  \ref{teorema en beta John} when $\O$
is H\"older-$\alpha$, $a=0$, and $p>1$, and we prove its optimality. We believe that these restrictions are only technical.

\begin{theorem}
\label{teorema en Holder alpha}
Let $\Omega\subset\R^n$ be a H\"older-$\alpha$ domain, $1< p\le q<\infty$, $b\le p(s-n)+p(n-1+\alpha)(1+\frac{1}{q}-\frac{1}{p})$,
and, additionally,
 $q\le \frac{n-p}{n-sp}$ when $p<\frac{n}{s}$. Given $s\in(0,1)$ and $f\in C^\infty(\O)$ we have$$
\inf_{c\in\R} \|f(y) - \bar f\|_{L^q(\O)}\le C \left\{\int_\O\int_{|x-z|\le d(x)/2}\frac{|f(z)-f(x)|^p}{|z-x|^{n+sp}}\delta(x,z)^b dz dx
\right\}^{1/p}
$$
where $\delta(x,z):=\min\{d(x),d(z)\}$.
\end{theorem}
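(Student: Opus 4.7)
The plan is to mimic the proof of Theorem \ref{teorema en beta John}, adapting the representation of Proposition \ref{representacion fraccionaria en beta John} so as to exploit the extra structure of H\"older-$\alpha$ domains. Since a H\"older-$\alpha$ domain is $\beta$-John with $\beta=1/\alpha$, the decomposition $|f(y)-\bar f|\le C(A(y)+B(y))$ is available. The crucial point is that the exponent of $d(x)$ appearing inside $B(y)^p=\int_{|x-y|<C_2 d(x)^\alpha} h(x)^p d(x)^\gamma\, dx$ is really a free parameter of the representation, constrained only by the convergence of the $t$-integral $\int_0^1 t^{-\beta[-n+(n+1-s)p'+(\gamma-1)p'/p]}\,dt$ that arose in the bound for $I_b$. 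Choosing $\gamma=b-(n-1+\alpha)p/q$ in place of the generic $\beta$-John choice $\gamma=b-(n+a)p/(\beta q)$ (with $a=0$) and redoing that convergence computation gives exactly the condition $b<sp-p+1-n+\alpha(p-1)+(n-1+\alpha)p/q$, which is the upper bound of the theorem. This parameter substitution is the conceptual core of the improvement.

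With this modified representation in hand, the $A$ term is controlled exactly as in Theorem \ref{teorema en John}, using boundedness of the fractional integral $I_s$ and the Hardy-Littlewood maximal operator, to produce
\begin{equation*}
\|A\|_{L^q(\Omega)}\le C \Bigl\{\int_\O \int_{|x-z|\le d(x)/2}\frac{|f(z)-f(x)|^p}{|z-x|^{n+sp}}\delta(x,z)^b\, dz\, dx\Bigr\}^{1/p},
\end{equation*}
which is where the restriction $q\le (n-p)/(n-sp)$ (when $p<n/s$) is used.

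For the $B$ term, Minkowski's integral inequality yields
\begin{equation*}
\|B^p\|_{L^{q/p}(\Omega)}\le \int_\Omega h(x)^p d(x)^{b-(n-1+\alpha)p/q}\,|B(x, C_2 d(x)^\alpha)\cap\Omega|^{p/q}\, dx,
\end{equation*}
and the target estimate $\|B\|_{L^q(\Omega)}^p \le C\int_\Omega h(x)^p d(x)^b\, dx$ reduces to a H\"older-$\alpha$-specific volume bound of the form $|B(x, C_2 d(x)^\alpha)\cap\Omega|\le C d(x)^{n-1+\alpha}$. This is the main obstacle of the proof: the analogous bound is false in general $\beta$-John domains (indeed, the rooms-and-corridors construction of Theorem \ref{mushroom} is designed precisely to violate it), so its derivation must invoke the fact that $\Omega$ is locally the hypograph of a H\"older-$\alpha$ function, from which the vertical extent of $B(x,r)\cap\Omega$ is controlled by the H\"older modulus of the boundary graph. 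The restriction $a=0$ enters naturally in this geometric step, in order to avoid an extra factor of $d(y)^a$ under the integral; this is presumably why the authors view the hypothesis as only technical. Once the volume bound is in place, combining the $A$ and $B$ estimates closes the argument, and the optimality claim is verified by a direct test-function computation on a pure power cusp $\{|x'|<x_n^{1/\alpha}\}$.
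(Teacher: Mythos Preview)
Your plan has a genuine gap at the step you yourself flag as the main obstacle. The volume bound
\[
|B(x,\,C_2 d(x)^\alpha)\cap\Omega|\le C\,d(x)^{\,n-1+\alpha}
\]
is \emph{false} for general H\"older-$\alpha$ domains. Take any Lipschitz domain (say a box), which is trivially H\"older-$\alpha$ for every $\alpha\le 1$ since a Lipschitz graph is a H\"older-$\alpha$ graph. For a point $x$ at distance $\epsilon$ from a flat face, $B(x,C_2\epsilon^\alpha)\cap\Omega$ is essentially a half-ball of volume $\sim\epsilon^{n\alpha}$, and $\epsilon^{n\alpha}\le C\epsilon^{\,n-1+\alpha}$ fails for small $\epsilon$ whenever $\alpha<1$. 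The bound you want is a feature of genuine cusps, not of the H\"older-$\alpha$ class, so it cannot be ``derived from the hypograph structure'' as you suggest. (The rooms-and-corridors obstruction you mention is a different phenomenon.)

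The paper avoids this by going back \emph{inside} the proof of Proposition~\ref{representacion fraccionaria en beta John} rather than just re-tuning its output exponent. The condition $|x-\gamma(t,y)|\le d(x)/2$ that defines the $I_b$ region, combined with the explicit DMRT curves for H\"older-$\alpha$ domains, yields the \emph{anisotropic} constraint $|x'-y'|\le C d(x)$, $|x_n-y_n|\le C d(x)^\alpha$ in the local graph coordinates --- a rectangle of volume $d(x)^{\,n-1+\alpha}$ regardless of the domain. This is what replaces your volume bound. The paper then uses the simpler (non-H\"older'd) form $B(y)=\int h(x)\,d(x)^{s-n}\,dx$ over this rectangle and proceeds by duality, bounding the dual pairing via the \emph{strong} maximal operator $M^S$ (resp.\ the strong fractional maximal $M^S_\eta$ when $p<q$) over axis-parallel rectangles. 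A side benefit of this route is that no $t$-integral convergence condition is invoked, so the endpoint value $b=p(s-n)+p(n-1+\alpha)(1+\tfrac1q-\tfrac1p)$ is reached, whereas your re-tuned $I_b$ computation gives only the strict inequality you wrote down.
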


\begin{proof}

Clearly, given $p$ and $q$, it suffices to prove the claim for $b= p(s-n)+p(n-1+\alpha)(1+\frac{1}{q}-\frac{1}{p})$.

Recall that by Proposition \ref{representacion fraccionaria en beta John} we could use
\begin{equation}
\label{ra}
|f(y) - \bar f|
\le C \int_{|x-y|< C_1 d(x)}\frac{h(x)}{|x-y|^{n-s}} \, dx
+ C \int_{|x-y|<C_2 d(x)^\alpha} h(x) \, d(x)^{s-n} dx.
\end{equation}

The key point is to improve this estimate,  observing that \eqref{cota en beta John} can be improved if $\Omega$ is H\"older-$\alpha$. According to the definition given in \cite{DMRT}, $\O= \cup_{j=0}^N O_j$ with $\bar O_0 \subset \O$ and
each $\O \cap O_j$ ($1\le j\le N$) given by the hypograph of a H\"older-$\alpha$ function
in an appropriate coordinate system.
It is clear that it is enough to obtain the desired estimate for each $O_j$ with fixed $j\ge 1$.
To simplify notation, assume that in a given $O_j$ the appropriate coordinate system is the usual one,
$x=(x',x_n)$, $x'\in\R^{n-1}$, $x_n\in\R$.

Now, by \eqref{cota-lejos}, $|x-\gamma(t,y)|\le d(x)/2$. But, it was proved in \cite[Section 5.2]{DMRT}
that this inequality implies that
$$
|x'-y'|\le C d(x) \qquad , \qquad
|x_n-y_n|\le C d(x)^\alpha
$$
and then we can replace the second integral in \eqref{ra} by
\begin{align*}
\int_{|x'-y'|\le Cd(x), |x_n-y_n|\le Cd(x)^{\alpha} } h(x)
d(x)^{s-n} \, dx\\
\end{align*}

Consequently, proceeding by duality,  for $\|g\|_{L^{q'}(\O)}=1$, we have
$$
\int_{O_j} |f(y)-\bar f| g(y) dy
\le A + B
$$
where $A$ is as in \eqref{dualidad} (with $a=0$), and can be bounded similarly. To estimate $B$ we consider separately the cases $p=q$ and $p<q$.

If $p=q$ we write
\begin{align*}
B &\le C \int_{\O} \frac{1}{d(x)^{n-1+\alpha}} \int_{|x'-y'|\le Cd(x),
|x_n-y_n|\le Cd(x)^{\alpha} } |g(y)|
\, dy h(x) d(x)^{s-1+\alpha} \, dx\\
&\le C \int_{\O} M^S\,g(x) h(x) d(x)^{s-1+\alpha} \, dx \\
&\le C \|M^S\,g\|_{L^{p'}(\O)} \|h\,d^{s-1+\alpha}\|_{L^{p}(\O)}\\
\end{align*}
where $M^S$ is the strong maximal function, i.e., the maximal function over the basis of rectangles with sides parallel to the axes, which is known to be bounded in $L^p$ for $1<p\le\infty$.
Then, the proof concludes as that of Theorem \ref{teorema en beta John} and adding over $j$.

If $p<q$,  taking $\eta=\frac{n}{p}-\frac{n}{q}$  (notice that $0<\eta<n$) we have,
\begin{align*}
B &\le C \int_{\O} \frac{1}{d(x)^{(n-1+\alpha)(1-\frac{\eta}{n})}} \int_{|x'-y'|\le Cd(x), |x_n-y_n|\le Cd(x)^{\alpha} } |g(y)| \, dy h(x) d(x)^{s-n+(n-1+\alpha)(1-\frac{\eta}{n})} \, dx\\
&\le C \int_{\O} M^S_{\eta}g(x) h(x) d(x)^{s-n+(n-1+\alpha)(1-\frac{\eta}{n})} \, dx \\
&\le C \|M^S_{\eta}g \|_{L^{p'}(\O)} \|h\, d^\frac{b}{p}\|_{L^{p}(\O)} \\
&\le C \|g\|_{L^{q'}(\O)}  \|h\, d^\frac{b}{p}\|_{L^{p}(\O)}
\end{align*}
where we have used that $M^S_\eta g(x) := \sup_{R\ni x} \frac{1}{|R|^{1-\frac{\eta}{n}}} \int_R |f(y)| \, dy$,
 $0<\eta<n$, where $R$ belongs to the family of rectangles with sides parallel to the axes,
 and that $M^S_{\eta}: L^{q'} \to L^{p'}$ for $\frac{1}{q}=\frac{1}{p}-\frac{\eta}{n}$ and $1<p< \frac{n}{\eta}$ (see, e.g., \cite[Theorem 3.1]{Mo}). As before, the proof concludes as in  Theorem \ref{teorema en beta John} and adding over $j$.

\end{proof}

In the next Theorem we prove that the previous result is optimal with respect to the
exponent $b$. We generalize an argument given in \cite{ADL}
for the case $s=1$.

\begin{theorem} Let $1<p\le q<\infty$. There exist a H\"older-$\alpha$ domain $\O\subset\R^n$ and
$f\in C^\infty(\O)$ such that
\begin{equation}
\label{optimo}
\inf_{c\in\R} \|f - c\|_{L^q(\O)}
\le C \left\{
\int_\O\int_{|x-z|< d(x)/2}\frac{|f(z)-f(x)|^p}{|z-x|^{n+sp}}\delta(x,z)^b dz dx
\right\}^{1/p}
\end{equation}
cannot hold unless $b\le p(s-1+\alpha)+p(n-1+\alpha)(\frac{1}{q}-\frac{1}{p})$.
\end{theorem}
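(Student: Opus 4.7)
The plan is to exhibit a standard H\"older-$\alpha$ cusp together with a family of smooth test functions $\{f_r\}$ concentrated near the cusp tip, and to show that the two sides of \eqref{optimo} scale with different powers of $r$ exactly when $b$ exceeds the claimed bound. Take
\[
\O = \{(x',x_n) \in \R^n : 0 < x_n < 1,\ |x'| < x_n^{1/\alpha}\} \cup B,
\]
where $B$ is a smooth cap attached at $x_n = 1$ so that $\O$ is a bounded H\"older-$\alpha$ domain whose only boundary singularity is at the origin. A direct geometric computation shows that on the bulk of each cross-section (say $|x'| \le \tfrac12 x_n^{1/\alpha}$) and for small $x_n$, $d(x)$ is comparable to $x_n^{1/\alpha}$, while the $(n-1)$-measure of the cross-section at height $x_n$ is comparable to $x_n^{(n-1)/\alpha}$.

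Fix $\phi \in C^\infty(\R)$ with $\phi \equiv 1$ on $(-\infty,1]$, $\phi \equiv 0$ on $[2,\infty)$ and $|\phi'| \le C$, and for each $r \in (0,1/4)$ set $f_r(x) := \phi(x_n/r) \in C^\infty(\O)$. Writing $A_1 := \{f_r = 1\}$ and $A_0 := \{f_r = 0\}$, we have $|A_1|$ comparable to $r^{(n-1+\alpha)/\alpha}$ while $|A_0|$ is bounded below by a positive constant independent of $r$. Minimising $|c|^q |A_0| + |1-c|^q |A_1|$ over $c \in \R$ then yields
\[
\inf_{c \in \R} \|f_r - c\|_{L^q(\O)} \ge c_0\, r^{(n-1+\alpha)/(q\alpha)}
\]
for some $c_0 > 0$ independent of $r$.

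For the right-hand side of \eqref{optimo}, note that the integrand vanishes unless at least one of $x, z$ lies in the transition strip $\{r < x_n < 2r\}$; moreover, the constraint $|z-x| \le d(x)/2$ together with $d(x) \le C x_n^{1/\alpha}$ and $\alpha < 1$ forces both variables into a region where $x_n, z_n \sim r$, $\delta(x,z) \sim r^{1/\alpha}$, and $|f_r(z) - f_r(x)| \le C|z-x|/r$. Performing the inner integration in $z$ via
\[
\int_{|z-x| \le d(x)/2} |z-x|^{p-n-sp}\, dz \le C d(x)^{p-sp}
\]
(valid since $s<1$) and then integrating the resulting $d(x)^{b+p-sp}/r^p$ over this effective support (of volume comparable to $r^{(n-1+\alpha)/\alpha}$, with $d(x)$ comparable to $r^{1/\alpha}$) produces
\[
\int_\O \int_{|x-z| \le d(x)/2} \frac{|f_r(z) - f_r(x)|^p}{|z-x|^{n+sp}}\, \delta(x,z)^b\, dz\, dx \le C\, r^{1 - p + (b + p - sp + n - 1)/\alpha}.
\]

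Plugging these two estimates into \eqref{optimo} forces $c_0\, r^{(n-1+\alpha)/(q\alpha)} \le C\, r^{(1/p)(1 - p + (b + p - sp + n - 1)/\alpha)}$ uniformly in $r \in (0,1/4)$, so letting $r \to 0^+$ demands that the left-hand exponent be no smaller than the right-hand one. A direct algebraic rearrangement shows this is equivalent to $b \le p(s-1+\alpha) + p(n-1+\alpha)(1/q - 1/p)$, proving the claim. The main technical obstacle is controlling the contribution of the lateral layer near $\{|x'| = x_n^{1/\alpha}\}$ to the right-hand-side upper bound, where $d$ degenerates below $r^{1/\alpha}$; this is handled by observing, via a standard coarea computation, that $\int_{|x'|< x_n^{1/\alpha}} d(x)^{b+p-sp}\, dx'$ is still of order $x_n^{(n-1+b+p-sp)/\alpha}$ provided the exponents are in the admissible range, so that the bulk contribution dominates in all cases.
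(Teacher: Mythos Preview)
Your proof is correct and follows essentially the same strategy as the paper's: both use the standard H\"older-$\alpha$ cusp $\{|x'|<x_n^{1/\alpha},\ 0<x_n<1\}$ and test functions depending only on the height $x_n$, then reduce the question to a comparison of exponents. The only difference is cosmetic: you run a scaling argument with smooth cutoffs $\phi(x_n/r)$ and let $r\to 0$, whereas the paper takes the single unbounded function $f(x)=x_n^{-\nu}$ and selects $\nu$ so that the left side diverges while the right side remains finite; both routes yield the identical algebraic constraint on $b$, and your coarea remark on the lateral layer is in fact slightly more careful than the paper's crude bound $d(x)\le x_n^{1/\alpha}$.
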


\begin{proof} Assume that $b>p(s-1+\alpha)+p(n-1+\alpha)(\frac{1}{q}-\frac{1}{p})$.
Using the same notation as in the previous section we write $x=(x',x_n)$.
Given $0<\alpha\le 1$ define the H\"older-$\alpha$ domain
$$
\O=\{x\in\R^n\,:\, 0<x_n<1 \, , \, |x'|<x_n^{1/\alpha}\}
$$
and $f(x)=x_n^{-\nu}$, with $\nu>0$ to be chosen.

It is not difficult to check that $d(x)\sim x_n^{1/\alpha}-|x'|$. Then,
in the subdomain $\widetilde\O\subset\O$ defined by
$$
\widetilde\O=\{x\in\R^n\,:\, 0<x_n<1 \, , \, |x'|<x_n^{1/\alpha}/2\}
$$
we clearly have $d(x)\sim x_n^{1/\alpha}$. Then,
\begin{equation}
\label{izquierda}
\inf_{c\in\R} \|f - c\|_{L^q(\widetilde\O)}
\le \|f\|^q_{L^q(\widetilde\O)}
\sim \int_0^1\int_{|x'|<x_n^{1/\alpha}/2} x_n^{-\nu q} \, dx'\, dx_n
\sim \int_0^1 x_n^{-\nu q+\frac{n-1}{\alpha}} \, dx_n
\end{equation}
On the other hand, if $|x-z|< d(x)/2$ we have $|x_n-z_n|<x_n/2$ and so $x_n\sim z_n$.
Consequently,
$$
|f(z)-f(x)|=|z_n^{-\nu}-x_n^{-\nu}|\le C x_n^{-\nu-1} |z_n-x_n|
$$
and therefore,
$$
\begin{aligned}
\int_{|x-z|< d(x)/2}\frac{|f(z)-f(x)|^p}{|z-x|^{n+sp}}\delta(x,z)^b dz
&\le C x_n^{-(\nu+1)p}d(x)^b\int_{|x-z|< d(x)/2}|z-x|^{p-n-sp} dz\\
\le C x_n^{-(\nu+1)p}d(x)^{b+(1-s)p}
&\le C x_n^{-(\nu+1)p+\frac{b+(1-s)p}{\alpha}}
\end{aligned}
$$
where in the last inequality we have used that $d(x)\le x_n^{1/\alpha}$
and $b+(1-s)p\ge 0$ (which follows from our assumptions on $b, p$ and $q$).
Then,
\begin{equation}
\label{derecha}
\begin{aligned}
\int_\O\int_{|x-z|< d(x)/2}\frac{|f(z)-f(x)|^p}{|z-x|^{n+sp}}\delta(x,z)^b \,dz\,dx
&\le C\int_\O x_n^{-(\nu+1)p+\frac{b+(1-s)p}{\alpha}}\,dx\\
&\le C\int_0^1 x_n^{-(\nu+1)p+\frac{b+(1-s)p+n-1}{\alpha}}dx
\end{aligned}
\end{equation}
Therefore, \eqref{optimo} does not hold if there exists $\nu$ such that its LHS is infinite and its RHS is finite, that is, if
$$
-\nu q+\frac{n-1}{\alpha}
\le -1
<-(\nu+1)p+\frac{b+(1-s)p+n-1}{\alpha}
$$
and then, the existence of such a $\nu$ is equivalent to
$$
b>p(s-1+\alpha)+p(n-1+\alpha)\Big(\frac{1}{q}-\frac{1}{p}\Big)
$$
as we wanted to see.
\end{proof}

\end{document}